\theoremstyle{plain}
\newtheorem{thm}{Theorem}[section]
\newtheorem{lemma}[thm]{Lemma}
\newtheorem{corollary}[thm]{Corollary}
\newtheorem{proposition}[thm]{Proposition}
\theoremstyle{definition}
\newtheorem{remark}[thm]{Remark}
\newtheorem{example}[thm]{Example}
\newcommand{\Cinf}{\mathbb{C}_{\infty}}
\newcommand{\C}{\mathbb{C}}
\newcommand{\D}{\mathbb{D}}
\newcommand{\T}{\mathbb{T}}
\newcommand{\linspan}{{\rm span}}
\newcommand{\om}{\Omega}
\newcommand{\vp}{\varphi}
\begin{document}

\title{Dynamics of the Taylor shift on Bergman spaces}
\author{Jürgen Müller and Maike Thelen}

\maketitle

\begin{abstract} The Taylor (backward) shift on Bergman spaces $A^p(\om)$ for general open sets $\om$ in the extended complex plane shows rich variety concerning its dynamical behaviour. Different aspects are worked out, where in the case $p<2$ a recent result of Bayart and Matheron plays a central role.    
\end{abstract}

2010 Mathematics Subject Classification:  47A16, 47A35 (primary) 

Key words: Mixing, metric dynamics, topological dynamics\\

\section{Introduction}

Let $\Omega$ be an open subset of the Riemann sphere $\C_\infty$, where $\Cinf$ is equipped with the spherical metric. Moreover, let $H(\om)$ denote the Fr{\'e}chet space of functions holomorphic in $\om$ and vanishing at $\infty$, endowed with the topology of compact convergence. If $0 \in \Omega$, the  Taylor (backward) shift $T: H(\om) \to H(\om)$ is defined by
\[
 (Tf)(z):=\begin{cases}(f(z)-f(0))/z, & z \not =0\\   f'(0), & z=0 \end{cases}.
 \] 
It is easily seen that $T$ is a continuous operator on $H(\Omega)$. Moreover, the $n$-th iterate $T^n$ is given by 
\[
 (T^n f)(z):=\begin{cases}(f-S_{n-1}f)(z)/z^n, & z \not =0\\   a_n, & z=0 \end{cases},
 \]
where
$(S_n f)(z):=\sum_{\nu=0}^n a_\nu z^\nu$
 denotes the $n$-th partial sum of the Taylor expansion of $f$ about $0$. In particular, for  $|z|<{\rm dist}(0, \partial \om)$ we have 
 \[
 ( T^nf)(z)=\sum_{\nu=0}^\infty a_{\nu+n}z^\nu\,, 
  \]
that is, locally at $0$ the Taylor shift acts as backward shift on the Taylor coefficients. 

An important feature of the Taylor shift is that the spectrum is easily determined:  Setting $A^* = 1/(\Cinf \setminus A)$ for $A \subset \C_\infty$, the set $\om^*$ is compact in the complex plane $\C$ if and only if $\om$ is open in $\Cinf$ with $0 \in \om$. For $\alpha \in \C$, we define $\gamma(\alpha) \colon \{\alpha \}^* \to \C$ by
\begin{equation} \label{eigenfunctionsOfTaylor}
 \  \gamma (\alpha)(z)  \coloneqq \frac{1}{1-\alpha z} \quad (z \in \C_{\infty} \setminus \{1/\alpha\}).
\end{equation}
Since $\gamma(\alpha) \in H(\om)$ is an eigenfunction to the eigenvalue $\alpha$ for all $\alpha \in \om^*$, the point spectrum contains $\om^*$. Moreover, the corresponding eigenspace is one-dimensional. On the other hand, a calculation shows that  for $1/\alpha \in \Omega$ the operator $S_{\alpha}: H(\Omega) \to H(\Omega)$ defined by
\begin{equation*} \label{inverseSAlpha}
 (S_{\alpha}g)(z) = \frac{zg(z)-g(1/\alpha)/\alpha}{1-z\alpha} \qquad (z \in \om\setminus \{1/\alpha\})
\end{equation*}
(and appropriately extended at $1/\alpha$) is the continuous inverse to $T - \alpha I$ and hence the spectrum and the point spectrum both equal $\Omega^*$. 

The Taylor shift may also be considered as an operator on Banach spaces of functions holomorphic in $\om$ as e.g. Bergman spaces, that is, subspaces of $H(\om)$ of functions which are $p$-integrable with respect to the two-dimensional Lebesgue measure. In the case of the open unit disc $\om=\D$ there is an elaborated theory about invariant subspaces and cyclic vectors for Hardy- and Bergman spaces (see e.g. \cite{CimaRoss}, cf.\ also \cite{FKMR}).  Since we are interested also -- and in particular -- in the case of open sets $\om$ containing $\infty$ and in order to avoid difficulties according to local integrability at $\infty$, we modify the usual Bergman spaces and consider the  surface measure on the sphere $\Cinf$ instead. We denote the normalized surface measure by $m_2$ and, correspondingly, the normalized arc length measure on the unit circle $\T$  by $m_1$ of briefly $m$.  

For $1 \leq p < \infty$ and $\om \subset \Cinf$ open we define the Bergman space $A^p(\Omega)=A^p(\om,m_2)$ as the set of all functions $f \in H(\Omega)$ which fulfil 
\begin{equation*}
\Vert f \Vert_p :=\left( \int_\Omega \vert f \vert^p \, dm_2 \right)^{1/p} < \infty\, .
\end{equation*}
Then $(A^p(\Omega), \Vert \cdot\Vert_p)$ is a Banach space. If $\om$ is open and bounded in $\C$, the above norm and the classical $p$-norm with respect to Lebesgue measure are equivalent.

In case $0 \in \om$, the Taylor shift turns out to be a continuous operator on $A^p(\om)$. 
For $\alpha \in (\Omega^*)^\circ$ the functions $\gamma(\alpha)$ belong to $A^p(\Omega)$ for all $p$ and it is clear that $\gamma(\alpha)$ is an eigenfunction to the eigenvalue $\alpha$. Again, for $1/\alpha \in \om$,  the operator $S_\alpha$ from above, now defined on $A^p(\om)$, turns out to be the continuous inverse to $T - \alpha I$. 
Moreover, in the case $p<2$ the functions $\gamma (\alpha)$ belong to $A^p(\Omega)$ also for $\alpha \in \partial \Omega^*$, which yields that in this case the point spectrum  equals $\Omega^*$. Thus, we obtain:
\begin{enumerate}
\item $(\Omega^*)^\circ \subset \sigma_0(T)$ and $\overline{(\om^*)^\circ}  \subset \sigma(T) \subset \Omega^*$ for all $p \ge 1$.
\item $\sigma_0(T) = \sigma(T) = \Omega^*$ for $1 \leq p <2$.
\end{enumerate}   
 This gives high flexibility in prescribing spectra. In particular,  each compact plane set $K$ appears as spectrum and point spectrum of $T$ on $A^p(K^*)$ for $1 \le p <2$.  For $p \ge 2$ the situation is more delicate.  In general $\gamma(\alpha)$ does not belong to $A^p(\om)$ for $\alpha \in \partial \om^*$. If, however, $\om$ is "sufficiently small" near a boundary point $1/\alpha$ of $\om$, it may happen that $\gamma(\alpha)$ does belong to $A^p(\om)$. A simple example is the crescent-shaped region $\om=\D \setminus \{z:|z-1/2|\le 1/2\}$, where $\gamma(1) \in A^2(\om)$. This opens the possibility to choose $\om$ in such a way that eigenvalues are placed at certain points.  \\

In \cite{BMM}, \cite{BM} and  \cite{TaylorShiftArticle}, the behaviour of the Taylor shift with respect to topological dynamics was studied. We recall that an operator $T$  on a separable Fr\'echet space $X$ is called  \textit{hypercyclic} if $T$ has a dense orbit. This is equivalent to $T$ being \textit{topologically transitive}, that is, for any two nonempty open sets $U, V  \subset X$ the images $T^n(U)$ meet $V$ infinitely often. Moreover, $T$ is \textit{topologically mixing} if $T^n(U)$ meets $V$ for all sufficiently large $n$. Concerning these and further notions from topological (linear) dynamics we refer the reader to \cite{BayartMatheron} and \cite{LinChaos}. 

The main result from \cite{BMM} states that the following are equivalent: 
\begin{itemize}
\item $T$  is topologically mixing
\item $T$ is hypercyclic
\item Each component of $\om^*$ meets the unit circle $\T$. 
\end{itemize}
The situation changes drastically if we consider Bergman spaces.  If $T$ is  hypercyclic on $A^p(\om)$, for some  $p<2$, then $\om^*$ has to be perfect. In \cite{BM} it is shown that $T$ is mixing on $A^p(\om)$ if $\om \supset \D$ is a Jordan domain such that $\om^\ast\cap \T$ contains an arc.  

In Section 2 we study the Taylor shift operator for its metric dynamical properties. For $H(\om)$ and in the case of $A^p(\om)$ with $p<2$, the sufficient supply of eigenvectors $ \gamma(\alpha)$ allows the application of a recent deep result of Bayart and Matheron (Theorem 1.1 from \cite{BayMatArt}) which in many respects finishes a line of investigations concerning relations between the existence of unimodular eigenvectors and the dynamics of a linear operator. 

This is no longer possible for $p\ge 2$. In Section 3  we show that the Taylor shift is topologically mixing on $A^p(\om)$ for arbitrary $p$ if each component of $\om^*$ is sufficiently large near the unit circle $\T$.

\section{Metric dynamics of $T$} 

In this section, we investigate the Taylor shift on $H(\om)$ and $A^p(\om)$ for $p<2$ with respect to its metric dynamical behaviour. We recall that a measure-preserving transformation $T$ on a probability space $(X, \Sigma, \mu)$ is called \textit{weakly mixing (with respect to $\mu$)} if for any $A,B \in \Sigma$
\begin{equation*}
\lim\limits_{N \to \infty} \frac{1}{N}\sum\limits_{n=0}^{N-1}\vert \mu(A\cap T^{-n}(B))-\mu(A)\mu(B)\vert = 0
\end{equation*} 
and it is called \textit{strongly mixing (with respect to $\mu$)} if for any $A,B \in \Sigma$
\begin{equation*}
\mu (A \cap T^{-n}(B)) \to \mu(A)\mu(B) \quad (n \to \infty).
\end{equation*}
For further notions from ergodic theory we refer the reader e.g. to \cite{Walters}. 
Consider now $X$ to be a complex separable Fr{\'e}chet space. Each operator $T$ which is weakly mixing with respect to some measure of full support is frequently hypercyclic (see e.g. \cite[Corollary 5.5]{BayartMatheron}) and then also hypercyclic. Moreover, strong mixing with respect to some measure of full support implies topological mixing.
 
The operator $T$ is called weakly (resp. strongly) mixing \textit{in the Gaussian sense} if it is weakly (resp. strongly) mixing with respect to some Gaussian probability measure $\mu$ having full support. The definition of Gaussian probability measures and related results can be found in \cite{BayartMatheron} and \cite{BayMatArt}. For the notion of the cotype of a Banach space we refer to  \cite{TopicsInBanachSpaceTheory}. 
 
%If $T$ a continuous linear operator on $X$
%then $T$ has \textit{perfectly spanning $\mathbb{T}$-eigenvectors} if for all countable sets $D \subset \mathbb{T}$  the linear span of
%$\bigcup_{\lambda \in \mathbb{T} \setminus D} \textnormal{ker}(T -\lambda I)$
% is dense in $X$. 
%Furthermore,  $T$ has \textit{$\mathcal{U}_0$-perfectly spanning $\mathbb{T}$-eigenvectors} if the same condition holds for all sets $D\in \mathcal{U}_0$.

Let now $T$ be the Taylor shift on $H(\om)$ or $A^p(\om)$, where $1\le p<2$. In order to treat both cases simultaneously, we write $A^0(\om):=H(\om)$. Then, for $D \subset \T$, 
\begin{equation*}
\textnormal{span}\bigcup\limits_{\alpha \in \mathbb{T} \setminus D} \textnormal{ker}(T- \alpha I)=  {\rm span}\big(\gamma(\om^\ast \cap \T \setminus D)\big).
\end{equation*}
For $\Lambda \subset \T$ we say that $\gamma(\Lambda)$ is \textit{perfectly spanning} in $A^p(\om)$  if the span of  $\gamma(\Lambda \setminus D)$ is dense in $A^p(\om)$ for all countable $D \subset \T$. Similarly, we say that $\gamma(\Lambda)$ is \textit{$\mathcal{U}_0$-perfectly spanning} if this holds for all $D \in \mathcal{U}_0$, where $\mathcal{U}_0$ denotes class of sets of extended uniqueness (see e.g. \cite[p. 76]{DescriptiveSetTheory}). We recall that all sets of extended uniqueness have vanishing arc length measure. 

Since for $1\le p\le 2$ the Bergman space $A^p(\om)$ as closed subspace of $L^2(\om,m_2)$ is of cotype 2, we obtain as  an immediate consquence of the Bayart-Matheoren theorem  mentioned in the introduction (Theorem 1.1 from \cite{BayMatArt})

\begin{thm} \label{perfectlySpanningCrit}
Let $0 \in \Omega \subset \Cinf$ be an open set and let $T$ be the Taylor shift on $A^p(\om)$, where $p\in \{0\} \cup[1,2)$.
\begin{enumerate}
\item If $\gamma(\om^\ast \cap \T)$ is perfectly spanning in $A^p(\om)$ then $T$ is weakly mixing in the Gaussian sense.
\item If  $ \gamma(\om^\ast \cap \T)$ is $\mathcal{U}_0$-perfectly spanning  in $A^p(\om)$ then $T$ is strongly mixing in the Gaussian sense.
\end{enumerate}
If $p\in [1,2)$ then in both cases the converse implication is true.  
\end{thm}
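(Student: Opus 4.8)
The plan is to read the statement off from Theorem~1.1 of \cite{BayMatArt} after identifying the unimodular eigenvectors of $T$; the whole argument is a matching of hypotheses. Recall that the cited theorem asserts, for a bounded operator $T$ on a complex separable Banach space of cotype~$2$: $T$ is weakly mixing in the Gaussian sense if and only if $T$ has a \emph{perfectly spanning} set of unimodular eigenvectors, meaning that $\linspan\bigcup_{\alpha\in\T\setminus D}\ker(T-\alpha I)$ is dense for every countable $D\subset\T$; and $T$ is strongly mixing in the Gaussian sense if and only if the same holds for every $D$ in $\mathcal{U}_0$. The two ``if'' directions persist, with the appropriate Gaussian construction, on separable Fr\'echet spaces, while the two ``only if'' directions hold on any separable Banach space.

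First I would record the structural input. As noted just before the theorem, for $p\in[1,2)$ the space $A^p(\om)$ has cotype~$2$, being a closed subspace of an $L^p$-space over the finite measure $m_2$ with $p\le2$; and for $p=0$ the space $H(\om)$ is a separable (indeed nuclear) Fr\'echet space. This is exactly the structure the forward implications of the cited theorem require.

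Next I would identify $\ker(T-\alpha I)$ for $\alpha\in\T$. By the spectral discussion in the introduction, in the range $p\in\{0\}\cup[1,2)$ one has $\sigma_0(T)=\om^*$, every eigenspace is one-dimensional, and $\gamma(\alpha)$ is the eigenfunction to $\alpha\in\om^*$; moreover $\gamma(\alpha)\in A^p(\om)$ for each $\alpha\in\om^*\cap\T$ in this range. Hence $\ker(T-\alpha I)=\C\,\gamma(\alpha)$ for $\alpha\in\om^*\cap\T$ and $\ker(T-\alpha I)=\{0\}$ for $\alpha\in\T\setminus\om^*$, so that for every $D\subset\T$
\[
\linspan\bigcup_{\alpha\in\T\setminus D}\ker(T-\alpha I)=\linspan\big(\gamma(\om^*\cap\T\setminus D)\big),
\]
which is precisely the identity displayed just before the theorem. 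In particular, ``$\gamma(\om^*\cap\T)$ is perfectly spanning in $A^p(\om)$'' says exactly that $T$ has a perfectly spanning set of unimodular eigenvectors, and ``$\gamma(\om^*\cap\T)$ is $\mathcal{U}_0$-perfectly spanning'' says exactly that $T$ has a $\mathcal{U}_0$-perfectly spanning such set.

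Putting these together: if $\gamma(\om^*\cap\T)$ is perfectly spanning (respectively, $\mathcal{U}_0$-perfectly spanning), then $T$ has a perfectly spanning (respectively, $\mathcal{U}_0$-perfectly spanning) set of unimodular eigenvectors, and Theorem~1.1 of \cite{BayMatArt} --- its Banach form for $p\in[1,2)$, its Fr\'echet form for $p=0$ --- yields that $T$ is weakly (respectively, strongly) mixing in the Gaussian sense; for $p\in[1,2)$ the converse directions of that theorem, valid on any separable Banach space, combined with the same eigenvector identification give the reverse implications. I do not expect a genuine obstacle here: the only points needing attention are the verification of the cotype~$2$ hypothesis and, for $p=0$, the observation that the Gaussian construction behind the forward implications still goes through on the Fr\'echet space $H(\om)$ (which is also the reason no converse is asserted there). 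The substance of the result is wholly contained in the cited Bayart--Matheron theorem, which we are free to use.
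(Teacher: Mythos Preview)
Your proposal is correct and follows exactly the paper's approach: the theorem is stated there as an immediate consequence of Theorem~1.1 in \cite{BayMatArt}, after observing that $A^p(\om)$ has cotype~$2$ for $1\le p<2$ and after the eigenvector identification $\ker(T-\alpha I)=\C\,\gamma(\alpha)$ displayed just before the statement. If anything, your write-up is more careful than the paper's one-line justification---you correctly note that $A^p(\om)$ inherits cotype~$2$ as a closed subspace of $L^p(\om,m_2)$ (the paper's phrase ``closed subspace of $L^2(\om,m_2)$'' is a slip), and you make explicit that the Fr\'echet case $p=0$ uses only the forward implication of the Bayart--Matheron construction.
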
 
We say that a point $z \in \C$ is a \textit{perfect limit point} of $A  \subset \C$ if $U \cap A$ is uncountable for each neighbourhood $U$ of $z$, that is, if $z$ is a limit point of $A \cap U \setminus D$ for each countable set $D$.  Similarly, we say that $z$ is a \textit{$\mathcal{U}_0$-perfect limit point} if $z$ is a limit point of $A \cap U  \setminus D$ for each neighbourhood $U$ of $z$ and each $D \in \mathcal{U}_0$.  If $A \subset \T$ has locally positive arc length measure at $z$ then $z$ is a $\mathcal{U}_0$-perfect limit point.   
Applying an appropriate version of Runge's theorem which can be found e.g.\ in \cite[Theorem 10.2]{LueckingRubel} we obtain from Theorem \ref{perfectlySpanningCrit}:
\begin{corollary}\label{TaylorH}
Let $0 \in \Omega \subset \Cinf$ be an open set and let $T$ be the Taylor shift on $H(\Omega)$.
\begin{enumerate}
\item If each component of $\om^*$ contains a perfect limit point of $\Omega^* \cap \mathbb{T}$, then $T$ is weakly mixing in the Gaussian sense. 
\item If each component of $\om^*$ contains a $\mathcal{U}_0$-perfect limit point of $\Omega^* \cap \mathbb{T}$, then $T$ is strongly mixing in the Gaussian sense.
\end{enumerate}
\end{corollary}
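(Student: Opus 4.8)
The plan is to deduce Corollary \ref{TaylorH} from Theorem \ref{perfectlySpanningCrit} by verifying the perfect-spanning hypotheses, and the only real work is the density argument, for which Runge's theorem is the tool flagged in the text. So suppose that each component of $\om^*$ contains a perfect limit point of $\om^*\cap\T$; I must show that $\gamma(\om^*\cap\T)$ is perfectly spanning in $H(\om)$, i.e. that for every countable $D\subset\T$ the linear span of $\{\gamma(\alpha):\alpha\in\om^*\cap\T\setminus D\}$ is dense in $H(\om)$. The first step is a translation back to the "primed" picture: a function $g\in H(\om)$ pulls back, via $z\mapsto 1/z$ together with the conjugate-linear change that sends $H(\om)$ to holomorphic functions on the compact-in-$\C$ set $\om^*$ vanishing appropriately, and under this correspondence $\gamma(\alpha)$ becomes (a multiple of) the Cauchy kernel $w\mapsto 1/(w-\alpha)$ with pole at $\alpha\in\om^*\cap\T$. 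Thus density of $\linspan\gamma(\om^*\cap\T\setminus D)$ in $H(\om)$ is equivalent to density, in the space of holomorphic functions on $\om^*$ (with the compact-open topology), of the span of Cauchy kernels with poles in the prescribed subset of $\om^*\cap\T$.

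The second step is the approximation argument proper. By the version of Runge's theorem in \cite[Theorem 10.2]{LueckingRubel}, every function holomorphic on $\om^*$ can be approximated, uniformly on compact subsets, by rational functions whose poles lie one in each bounded component of the complement of $\om^*$ — and, crucially, the pole in each such component may be chosen to be any prescribed point of that component. Since every bounded complementary component of $\om^*$ is (the closure of, or contains) a component of the set $\om^*$ described in the statement — more precisely, since each component of $\om^*$ in the sense used in the hypothesis meets $\T$ in a perfect limit point of $\om^*\cap\T$ — I can push each pole onto a point of $\om^*\cap\T$ lying in the appropriate component. Then I must further move that pole while staying in $\om^*\cap\T\setminus D$: here the perfect-limit-point hypothesis is exactly what is needed, because $D$ is countable, so near any perfect limit point of $\om^*\cap\T$ there are points of $\om^*\cap\T\setminus D$ arbitrarily close, and a Cauchy kernel $1/(w-\alpha)$ depends continuously (locally uniformly on $\om^*$) on the pole $\alpha$ as long as $\alpha$ stays off the relevant compact set. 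Finally one expands each such Cauchy kernel in a partial-fraction/Taylor manner to write it as a limit of linear combinations of the $\gamma(\alpha)$ with $\alpha\in\om^*\cap\T\setminus D$, closing the density argument; invoking Theorem \ref{perfectlySpanningCrit}(1) then gives weak mixing in the Gaussian sense.

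For part (2) the structure is identical, with "countable $D$" replaced by "$D\in\mathcal U_0$" and "perfect limit point" replaced by "$\mathcal U_0$-perfect limit point"; the only change is in the last pole-pushing step, where one uses that a $\mathcal U_0$-perfect limit point of $\om^*\cap\T$ admits points of $\om^*\cap\T\setminus D$ arbitrarily close for every $D\in\mathcal U_0$, and then appeals to Theorem \ref{perfectlySpanningCrit}(2) for strong mixing in the Gaussian sense. One should also record the auxiliary reduction that it suffices to treat $D$ disjoint from $\om^*\cap\T$ only in the sense of being avoided — no: rather, one simply notes that removing a countable (resp. $\mathcal U_0$) set cannot destroy a perfect (resp. $\mathcal U_0$-perfect) limit point, which is immediate from the definitions given just before the corollary.

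I expect the main obstacle to be the precise bookkeeping of \emph{which} points are available as poles in \emph{which} component of the complement of $\om^*$: one must be careful that a "component of $\om^*$" in the statement corresponds correctly to a bounded component of $\Cinf\setminus\om^*$ under the $z\mapsto1/z$ correspondence (the component of $\om^*$ containing $0$, if $\om^*$ is unbounded, needs separate and easy handling since the corresponding kernel is constant), and that the cited form of Runge's theorem genuinely permits the pole in each component to be slid to a prescribed point; granting that, the continuity of the Cauchy kernel in its pole and the perfect-limit-point hypothesis combine routinely to finish. The reduction from $H(\om)$ to functions on the compact set $\om^*$, and the identification of $\gamma(\alpha)$ with a Cauchy kernel, are the conceptual pivots but involve no hard estimate.
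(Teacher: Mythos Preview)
Your overall strategy---verify the (perfect) spanning hypothesis via Runge's theorem and then invoke Theorem~\ref{perfectlySpanningCrit}---is exactly what the paper does, and the paper's proof is in fact just that one-line reference. However, your ``translation'' step is both unnecessary and carried out with the roles of $\Omega^*$ and its complement systematically swapped. There is no need to change variables: the functions $\gamma(\alpha)$ already lie in $H(\Omega)$, with a simple pole at $1/\alpha\in\C_\infty\setminus\Omega$, and the map $\alpha\mapsto 1/\alpha$ sends components of $\Omega^*$ bijectively to components of $\C_\infty\setminus\Omega$. If you do substitute $w=1/z$, then $H(\Omega)$ becomes a space of holomorphic functions on the \emph{open} set $\C_\infty\setminus\Omega^*$ (not ``on $\Omega^*$'', which is compact), and the transformed $\gamma(\alpha)$ has its pole at $\alpha\in\Omega^*$, so Runge asks for poles in components of $\Omega^*$---not in ``bounded components of the complement of $\Omega^*$'' as you write. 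The ``conjugate-linear change'' is spurious; no duality is needed. You correctly anticipated that this bookkeeping would be the obstacle, and indeed it is where the write-up goes wrong.

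Once the geometry is fixed the argument is clean: for countable (resp.\ $\mathcal U_0$) $D$, each component of $\Omega^*$ contains a point $z_K\in\T$ that is still a limit point of $\Lambda:=(\Omega^*\cap\T)\setminus D$. Since $\alpha\mapsto\gamma(\alpha)$ is holomorphic as a map into $H(\Omega)$, divided differences along a sequence in $\Lambda$ converging to $z_K$ put all derivatives $\partial_\alpha^k\gamma(z_K)$---equivalently all powers $(z-1/z_K)^{-j}$---into the closed span of $\gamma(\Lambda)$. Now the pole-pushing version of Runge (one prescribed pole per component of $\C_\infty\setminus\Omega$, higher-order poles allowed) yields density in $H(\Omega)$. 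Your phrase ``expand each Cauchy kernel in a partial-fraction/Taylor manner'' gestures at this, but the essential point---that the \emph{accumulation} of $\Lambda$ at $z_K$ is precisely what lets simple poles generate the higher-order poles Runge requires---should be made explicit.
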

\begin{remark} 
By separating singularities it is easily seen from \cite[Corollary 1]{TaylorShiftArticle} that $\om^\ast$ necessarily has to be  perfect if the Taylor shift  on $H(\om)$ is weakly mixing (or merely frequently hypercyclic). Note that $\om^\ast \cap \T$ not necessarily has to be perfect: If $B$ is some closed arc on $\T$ symmetric to the real axis  and  
\[
\om=\C \setminus \big(B \cup (-\infty,-1]\cup [1, \infty)\big)
\]
 then $\om^\ast=B \cup [-1,1]$ satisfies the assumption of Corollary \ref{TaylorH}.2, hence $T$ is strongly mixing on $H(\om)$.
\end{remark}
We turn to Bergman spaces. Theorem \ref{perfectlySpanningCrit} shows that the question whether $T$ is (strongly or weakly) mixing completely reduces to a question about  mean approximation by rational functions with simple poles in appropriate subsets of $\T$.  The following result on separation of singularities implies that the general question may be reduced to special cases. 
\begin{proposition} \label{thmDirectSumBergmanSpace}
Let $p \ge 1$ and let  $\Omega_1, \om_2 \subset \Cinf$ be open sets in $\C_\infty$ with $\om_1 \cup \om_2=\Cinf$. Then
$A^p(\Omega_1 \cap \om_2) = A^p(\Omega_1) \oplus A^p(\om_2)$. 
\end{proposition}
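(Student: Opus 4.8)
The plan is to read the claimed equality as the statement that the natural addition map is an isomorphism of Banach spaces. Each $A^p(\om_j)$ maps into $A^p(\om_1\cap\om_2)$ by restriction, which is norm non-increasing since $\om_1\cap\om_2\subset\om_j$, so one has a bounded map
\[
\Phi\colon A^p(\om_1)\oplus A^p(\om_2)\longrightarrow A^p(\om_1\cap\om_2),\qquad \Phi(g_1,g_2)=g_1|_{\om_1\cap\om_2}+g_2|_{\om_1\cap\om_2}.
\]
By the open mapping theorem it suffices to show that $\Phi$ is bijective; injectivity of $\Phi$ will in particular show that the two restrictions are injective, so that the displayed identity is literally an equality of subspaces of $A^p(\om_1\cap\om_2)$. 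If $\om_1\cap\om_2=\emptyset$ then, $\Cinf$ being connected, one $\om_j$ is empty and the other is $\Cinf$, and the statement is trivial because $H(\Cinf)=\{0\}$; so assume $\om_1\cap\om_2\neq\emptyset$.

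Injectivity is the easy half, and I would dispose of it first. If $\Phi(g_1,g_2)=0$ then $g_1=-g_2$ on $\om_1\cap\om_2$, so --- since $\om_1\cup\om_2=\Cinf$ --- the two functions glue to a single holomorphic function $g$ on the whole Riemann sphere with $g(\infty)=0$; by Liouville's theorem $g\equiv0$, whence $g_1\equiv0$ on $\om_1$ and $g_2\equiv0$ on $\om_2$.

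For surjectivity I would carry out a $\bar\partial$-splitting. Given $f\in A^p(\om_1\cap\om_2)$, choose a smooth partition of unity $\chi_1+\chi_2\equiv1$ on $\Cinf$ subordinate to the cover $\{\om_1,\om_2\}$ --- so $0\le\chi_j\le1$ and $\operatorname{supp}\chi_j\subset\om_j$ --- arranged in addition so that each $\chi_j$ is constant in a neighbourhood of $\infty$, which is possible because $\infty$ has a neighbourhood contained in some $\om_j$. Then $\omega:=f\,\bar\partial\chi_2=-f\,\bar\partial\chi_1$ is a smooth $(0,1)$-form supported in a compact subset of $\C\cap\om_1\cap\om_2$; let $u$ be its Cauchy transform, so that $u\in C^\infty(\C)$, $\bar\partial u=\omega$, $u$ is holomorphic on $\C\setminus\operatorname{supp}\omega$, and $u(z)\to0$ (hence $u$ is bounded) as $z\to\infty$. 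Now put $f_1:=\chi_2 f-u$ on $\om_1$ and $f_2:=\chi_1 f+u$ on $\om_2$, where $\chi_2 f$ denotes the function on $\om_1$ equal to $\chi_2 f$ on $\om_1\cap\om_2$ and to $0$ on $\om_1\setminus\operatorname{supp}\chi_2$ --- this is unambiguous and smooth on all of $\om_1$ because $\om_1\setminus\om_2\subset\om_1\setminus\operatorname{supp}\chi_2$ --- and $\chi_1 f$ is understood analogously on $\om_2$. One then verifies, in order: $\bar\partial f_1=\omega-\omega=0$ on $\om_1$ and $\bar\partial f_2=-\omega+\omega=0$ on $\om_2$; $f_j(\infty)=0$ whenever $\infty\in\om_j$, using the normalization of $\chi_j$, the holomorphy of $u$ near $\infty$, and $u(\infty)=f(\infty)=0$; $\|f_j\|_{L^p(\om_j)}\le\|f\|_{L^p(\om_1\cap\om_2)}+\|u\|_\infty\, m_2(\Cinf)^{1/p}<\infty$, since $0\le\chi_j\le1$ and $m_2$ is finite; and $f_1+f_2=(\chi_1+\chi_2)f=f$ on $\om_1\cap\om_2$. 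Hence $f_j\in A^p(\om_j)$ and $\Phi(f_1,f_2)=f$.

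The only point I expect to need genuine care is the $\bar\partial$-surgery of the previous paragraph: one must confirm that the truncation $\chi_j f$, extended by zero, is smooth on the whole of $\om_j$ and not merely on the overlap, that subtracting the Cauchy transform $u$ cancels exactly the antiholomorphic defect, and that the value at $\infty$ comes out right in the cases where $\infty$ belongs to the domain. The $L^p$-bounds are soft --- only $|\chi_j|\le1$, boundedness of the fixed function $u$, and finiteness of $m_2$ are used --- and continuity of $\Phi^{-1}$ is then free from the open mapping theorem. One could instead invoke an off-the-shelf separation-of-singularities theorem for $A^p$, but the $\bar\partial$ argument is short enough to include directly.
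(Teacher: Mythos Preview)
Your argument is correct. Both injectivity via Liouville and surjectivity via the $\bar\partial$-splitting go through as you describe; the only delicate verifications are exactly the ones you flag (smooth extension of $\chi_j f$ by zero across $\om_j\setminus\om_{3-j}$, the value at $\infty$, and the $L^p$-bound from boundedness of $u$ together with finiteness of $m_2$), and each of them checks out.

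The paper takes a different and somewhat shorter route: it quotes the classical separation of singularities $H(\om_1\cap\om_2)=H(\om_1)\oplus H(\om_2)$ as a topological direct sum, and then only has to show that the $H$-level decomposition $f=f_1+f_2$ of an $A^p$-function lands in $A^p(\om_1)\oplus A^p(\om_2)$. This is done by a compactness observation: since $\partial\om_j\subset\om_{3-j}$, one can choose disjoint compact neighbourhoods $U_j\subset\om_{3-j}$ of $\partial\om_j$; then $\om_j\setminus U_j$ is relatively compact in $\om_j$ (so $f_j$ is bounded there), while on $\om_j\cap U_j\subset\om_1\cap\om_2$ one writes $f_j=f-f_{3-j}$ with $f\in L^p$ and $f_{3-j}$ bounded on the compact set $U_j$. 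Your approach is more self-contained --- you reprove separation of singularities via the Cauchy transform rather than citing it --- and yields an explicit splitting with an explicit $L^p$-estimate, at the cost of a longer argument. The paper's approach trades that explicitness for brevity by outsourcing the analytic work to the known $H(\om)$-level result.
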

\begin{proof}
It is known that, by separation of singularities of holomorphic functions, 
\[
H(\om_1 \cap \om_2) = H(\Omega_1) \oplus H(\om_2)
\]
as topological direct sum. Since convergence in $A^p(\om)$ implies convergence in $H(\om)$ (see e.g.\ \cite[Chapter 1, Theorem 1]{DurenSchuster}), it suffices to show that for $f \in A^p(\om_1 \cap \om_2)$ decomposed as  $f=f_1  + f_2 \in H(\om_1)\oplus H(\om_2)$  we have $f_j \in A^p(\om_j)$ for $j=1,2$. 

Let $f \in A^p(\Omega_1 \cap \om_2)$ and $f=f_1 + f_2$ with $f_j \in H(\om_j)$. Since the boundary of $\om_1 \cap \om_2$ is the union of the (compact) boundaries $\partial \om_j\subset\om_{3-j}$, for $j=1,2$, we can find compact disjoint neighbourhoods $U_j\subset \om_{3-j}$ of $\partial \om_j$. Then
\begin{equation*}
\int_{\om_j} \vert f_j\vert^p dm_2 = \int_{\om_j \setminus U_j} \vert f_j \vert^p dm_2 + \int_{\om_j \cap U_j} \vert f-f_{3-j} \vert^p dm_2 < \infty.
\end{equation*}
This yields $f_j \in A^p(\om_j)$ for $j=1,2$. 
\end{proof}
An immediate consequence is the fact that hypercyclicity of $T$ on $A^p(\om)$, for some $1\le p<2$, implies that $\om^\ast$ is perfect: Suppose that $\zeta$ is an isolated point of $\om^*$. Then  we have 
\[
A^p(\om)= A^p(\om \cup  \{1/\zeta\})\oplus  A^p(\Cinf\setminus \{1/\zeta\}).
\]
By \cite[Proposition 2.25]{LinChaos} it follows that $T$ is also hypercyclic on $A^p(\Cinf\setminus \{1/\zeta\})$. Since $A^p(\Cinf \setminus \{1/\zeta\})$ reduces to the span of $\gamma(\zeta)$ and is thus one-dimensional we get a contradiction.

In order to be able to reduce the case of open sets $\om$ containing $\infty$ to the case of bounded open sets in $\C$ we recall 
\begin{proposition}\label{approximationForUnboundedSets}
Let $X$ a Fr{\'e}chet space and let $L$ be complemented in $X$. If $X=L \oplus M$ and if $A \subset L$ and $B \subset M$  with $\linspan(A+B)$ dense in $X$ then $\linspan(A)$ is dense in $L$. 
\end{proposition}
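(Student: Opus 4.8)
The plan is to push the density hypothesis through the continuous projection attached to the complemented subspace $L$. Here I read $\linspan(A+B)$ as $\linspan(A \cup B) = \linspan(A) + \linspan(B)$, which is the only interpretation relevant to the intended application (eigenfunctions lying in $L$ together with eigenfunctions lying in $M$). Since $L$ is complemented with $X = L \oplus M$, there is a continuous linear projection $P \colon X \to X$ with $P(X) = L$ and $\ker P = M$; in particular $P$ restricts to the identity on $L$ and vanishes on $M$. Note that complementedness of $L$ is exactly what supplies both that $L$ is closed in $X$ and that $P$ is continuous.

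First I would compute the image of the span under $P$. By linearity, $P\big(\linspan(A \cup B)\big) = \linspan\big(P(A) \cup P(B)\big)$; since $A \subset L$ we have $P(A) = A$, and since $B \subset M = \ker P$ we have $P(B) \subset \{0\}$. Hence $P\big(\linspan(A \cup B)\big) = \linspan(A)$.

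Then I would invoke continuity of $P$ in the form $P(\overline{S}) \subset \overline{P(S)}$, valid for every subset $S \subset X$. Applying this with $S = \linspan(A \cup B)$, whose closure is all of $X$ by hypothesis, gives $L = P(X) = P(\overline{S}) \subset \overline{P(S)} = \overline{\linspan(A)}$. Since $\linspan(A) \subset L$ and $L$ is closed, the closure of $\linspan(A)$ in $X$ coincides with its closure in $L$, and the inclusion just obtained forces $\overline{\linspan(A)} = L$, i.e.\ $\linspan(A)$ is dense in $L$, as claimed.

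I do not expect a genuine obstacle here: the argument is entirely soft, using only that a continuous linear map commutes with forming spans and carries closures into closures of images. The single point deserving an explicit word is the reduction of the statement to exactly these two facts plus the continuity of the projection furnished by complementedness; the rest is bookkeeping.
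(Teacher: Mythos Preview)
Your proof is correct and follows essentially the same approach as the paper: both arguments hinge on applying the continuous projection $P$ of $X$ onto $L$ along $M$ (the paper cites \cite[Theorem 5.16]{RudinFunctionalAnalysis} for its continuity) and using that $P$ fixes $A$ and kills $B$. The paper phrases this element-wise---pick $a\in L$, approximate by $x_n=a_n+b_n$ with $a_n\in\linspan(A)$, $b_n\in\linspan(B)$, and apply $P$ to get $a_n\to a$---whereas you package the same step as the inclusion $P(\overline{S})\subset\overline{P(S)}$; these are the same argument in different clothing.
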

\begin{proof}
Let $a \in L$. Then a sequence $(x_n)$ in $\linspan(A+B)$ exists with $x_n \to a$ in $X$ as $n$ tends to $\infty$. We write  $x_n = a_n + b_n$ with $a_n \in {\rm span}\, A$ and $b_n \in {\rm span}\, B$. Since $a$ belongs to $L$ and since the projection of $X$ to $L$ along $M$ is continuous (see e.g.\ \cite[Theorem 5.16]{RudinFunctionalAnalysis}), the sequence $(a_n)$ converges to $a$.
\end{proof}
\begin{remark}\label{decomp}
Let $\Omega$ be open with $\infty \in \Omega$ and $\rho> \max_{z \in \C_\infty \setminus \om}|z|$. If we put $\om_\rho:=\om \cap \rho\D$ then 
\[
 A^p(\om_\rho)=A^p(\om)\oplus A^p(\rho\D) 
\] 
and $\gamma(\rho^{-1} \D) \subset A^p(\rho\D)$ for all $p\ge 1$. 
If $B \subset A^p(\om)$ is so that  $B+\gamma(\rho^{-1} \D)$  densely spans $A^p(\om_\rho)$ then  $B$ has dense span in $A^p(\om)$  by Proposition \ref{thmDirectSumBergmanSpace}. 
\end{remark}

For $K \subset \C$ compact, a set $\Lambda \subset K$ is called a \textit{$K$-uniqueness set} if every continuous function on $K$ which is holomorphic in the interior of $K$ and vanishes on $\Lambda$ vanishes identically. Obviously, if $K$ is nowhere dense then $\Lambda$ is a $K$-uniqueness set if and only if $\Lambda$ is dense in $K$. More generally, it is easily seen that $\Lambda \subset K$ is a $K$-uniqueness set if and only if $K\setminus \overline{K^\circ} \subset \overline{\Lambda}$ and for every component $C$ of $K^\circ$ the set $\Lambda \cap \overline{C}$ is a uniqueness set for $\overline{C}$.  

With that notion, we have the following result on rational approximation. For the case $p=1$ and Lebesgue measure instead of surface measure the result is due to Bers (\cite{BersL1Approximation}). 

\begin{thm}\label{uniquenessSetApproximation}\label{TaylorOnA1}
Let $1\leq p<2$ and $0 \in \Omega \subset \Cinf$ be an open set which is either bounded in $\C$ or contains $\infty$. Moreover,  suppose $\Lambda$ to be a subset of $\om^*$.  
\begin{enumerate}
\item If $\Lambda$ is a $\om^*$-uniqueness set then the span of $\gamma( \Lambda)$ is dense in $A^p(\Omega)$.
\item If  $m_2(\om^*)=0$ then the span of $\gamma(\Lambda)$ is dense in $A^p(\Omega)$ if and only if $\Lambda$ is dense in $\om^*$. 
\end{enumerate}
\end{thm}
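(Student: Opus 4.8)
The plan is to reduce everything to a duality statement on an $L^q$ space and then invoke a Bers-type $L^p$ rational approximation theorem. First I would dispose of the reduction to bounded $\om$: if $\infty \in \om$, pick $\rho$ as in Remark \ref{decomp}, set $\om_\rho = \om \cap \rho\D$, and note $\gamma(\rho^{-1}\D) \subset A^p(\rho\D)$ together with $A^p(\om_\rho) = A^p(\om) \oplus A^p(\rho\D)$. If $\Lambda$ is a $\om^*$-uniqueness set, then $\Lambda \cup \rho^{-1}\D$ contains a $\om_\rho^*$-uniqueness set (one checks the boundary/interior criterion: $\om_\rho^* = \om^* \cup \rho^{-1}\D$ and the new piece $\rho^{-1}\D$ is entirely filled in), so by the bounded case $\gamma(\Lambda) + \gamma(\rho^{-1}\D)$ densely spans $A^p(\om_\rho)$, and Remark \ref{decomp} (i.e.\ Proposition \ref{thmDirectSumBergmanSpace} via Proposition \ref{approximationForUnboundedSets}) gives that $\gamma(\Lambda)$ densely spans $A^p(\om)$. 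So from now on $\om$ is bounded in $\C$, hence $A^p(\om)$ is (equivalent to) the classical Bergman space and its dual is $A^p(\om)^* \cong L^q(\om)/{}^\perp A^p(\om)$ with $1/p + 1/q = 1$, $q > 2$.

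For part (1), by Hahn--Banach it suffices to show: if $g \in L^q(\om)$ annihilates every $\gamma(\alpha)$, $\alpha \in \Lambda$, then $g$ annihilates all of $A^p(\om)$. The key computation is that the Cauchy-type transform
\[
 \widehat{g}(w) := \int_\om \frac{g(z)}{1 - wz}\, dm_2(z)
\]
is, up to normalization, $\langle \gamma(w), g\rangle$, so the hypothesis says $\widehat{g}$ vanishes on $\Lambda$. Now $\widehat{g}$ extends to a function continuous on $\om^*$ (the $L^q$-integrability with $q>2$ gives enough regularity of this area Cauchy transform, cf.\ the standard estimate that $\int 1/|1-wz|\,dm_2(z)$ is bounded) and holomorphic in $(\om^*)^\circ$. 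Since $\Lambda$ is a $\om^*$-uniqueness set, $\widehat{g} \equiv 0$ on $\om^*$. Then a second step shows $\widehat{g} \equiv 0$ on $\om^*$ forces $g \perp A^p(\om)$: expanding $1/(1-wz)$ in powers of $w$ near $0$ shows all "moments" $\int_\om g(z) z^n\, dm_2(z)$ vanish, and more globally vanishing of the area Cauchy transform on the complement of $\om$ (note $\C \setminus \om \subset (\om^*)^*$ corresponds under $z \mapsto 1/z$ to a subset where $\widehat g = 0$) is exactly the condition that $g$ annihilate the $A^p$-closure of rational functions with poles off $\om$; that closure is all of $A^p(\om)$ by the Bers/Havin-type theorem quoted before the statement (\cite{BersL1Approximation}, here with surface measure and general $p < 2$). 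This is the main obstacle: making precise the regularity of the area Cauchy transform under merely $L^q$ hypotheses and correctly matching "vanishing on the $\om^*$-uniqueness set" with "vanishing of the transform off $\om$" through the inversion $z \mapsto 1/z$, including bookkeeping of the poles of $\gamma(\alpha)$ at $1/\alpha$.

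For part (2), when $m_2(\om^*) = 0$ the space $\om^*$ is nowhere dense, so by the remark preceding the theorem a $\om^*$-uniqueness set is exactly a dense subset of $\om^*$; thus part (1) gives the "if" direction. For "only if": if $\gamma(\Lambda)$ densely spans $A^p(\om)$ but $\Lambda$ is not dense in $\om^*$, choose $\alpha_0 \in \om^* \setminus \overline{\Lambda}$; then $\gamma(\alpha_0)$ is an eigenfunction with eigenvalue $\alpha_0 \notin \overline{\Lambda}$, and since the $\gamma(\alpha)$ are eigenfunctions to distinct eigenvalues, the functional "coefficient of $\gamma(\alpha_0)$" — concretely, evaluation of the Cauchy transform at $\alpha_0$, which is continuous because $\alpha_0$ is bounded away from $\om^*$ where the kernel $1/(1-\alpha_0 z)$ is bounded on $\om$ — is a bounded linear functional vanishing on all of $\gamma(\Lambda)$ but not at $\gamma(\alpha_0)$, contradicting density. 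I expect part (2) to be routine once part (1) is in hand; essentially all the work is in the duality/Cauchy-transform argument for part (1).
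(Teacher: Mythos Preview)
Your plan is essentially the paper's own proof: Hahn--Banach to an $L^q$ representative, form the Cauchy-type transform $\widehat g$, use continuity (from $q>2$) plus the $\om^*$-uniqueness hypothesis to kill $\widehat g$ on all of $\om^*$, then invoke a rational-approximation theorem (the paper cites Hedberg rather than Bers for general $p<2$) to conclude $\ell=0$; the unbounded case is reduced exactly via Remark~\ref{decomp}.

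Two places deserve cleaning up. First, your ``second step'' is overcomplicated and contains a slip: $(\om^*)^*=\om$, not a superset of $\C\setminus\om$, so the inversion bookkeeping you sketch is wrong. But none of that is needed: by definition $\widehat g(\alpha)=\ell(\gamma(\alpha))$ for every $\alpha$, so $\widehat g|_{\om^*}=0$ already says $\ell$ annihilates $\gamma(\alpha)$ for \emph{all} $\alpha\in\om^*$, i.e.\ every rational function with simple poles in $\C\setminus\Omega$; density of those in $A^p(\om)$ finishes immediately. Second, in the ``only if'' of part~(2) your proposed functional does not work as stated: since $m_2(\om^*)=0$ the point $\alpha_0$ lies in $\om^*=\partial\om^*$, so it is \emph{not} bounded away from $\om^*$ and $z\mapsto 1/(1-\alpha_0 z)$ is \emph{not} bounded on $\om$. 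A correct (and equally short) argument: pick a neighbourhood $U$ of $\alpha_0$ disjoint from $\overline\Lambda$, set $\om'=\om\cup(1/U)$; then $m_2(\om'\setminus\om)=0$ makes $A^p(\om')\hookrightarrow A^p(\om)$ an isometric (hence closed) embedding containing $\gamma(\Lambda)$, while $\gamma(\alpha_0)$ cannot lie in the image since it has a genuine pole at $1/\alpha_0\in\om'$.
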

\begin{proof}
1. We first assume that $\Omega$ is bounded in $\C$. Let $\ell \in A^p(\Omega)'$ with $\ell (\gamma (\alpha)) = 0$ for all $\alpha \in \Lambda$. Since $A^p(\Omega)$ is a subspace of $L^p(\Omega)$ the Hahn-Banach theorem yields that $\ell$ can be extended to a continuous linear functional on $L^p(\Omega)$. Thus, there exists a function $g \in L^q(\Omega)$, where $q$ is the conjugated exponent, such that 
\begin{equation*}
\ell(f)  =\int_\Omega f\overline{g}\, d m_2 
\end{equation*}
for all $f\in A^p(\Omega)$. For the measure  $1_\Omega g dm_2 \in M(\overline{\Omega})$ the Cauchy transform 
\begin{equation*}
(Vg)(\alpha):=\int_\Omega \frac{\overline{g}(\zeta)}{1-\zeta \alpha} dm_2(\zeta) = \frac{1}{\alpha}\int_\Omega \frac{\overline{g}(\zeta)}{1/\alpha-\zeta} dm_2(\zeta)
\end{equation*}
of $1_\om g dm_2$ is holomorphic in the interior of $\Omega^*$ and continuous in $\C$ as the convolution of $w \mapsto 1/w \in A_p(\C_\infty \setminus\{0\})$ and the function $1_\om g \in L^q(\C)$. Since 
\[
(Vg) (\alpha) = \ell (\gamma (\alpha)) = 0
\]
 for all $\alpha \in \Lambda$ and  since $\Lambda$ is a $\Omega^*$-uniqueness set  we have that $Vg \vert_{\Omega^*}= 0$ and thus 
$\ell(\gamma (\alpha)) = 0 $ for all $\alpha \in \Omega^*$.
So $\ell$ vanishes on the set of rational functions with simple poles in $\C \setminus \Omega$. According to (the proof of) \cite[Theorem 1]{Hedberg}, the set of these functions is dense in $A^p(\om)$. This yields that $\ell=0$ and then the Hahn-Banach theorem implies the assertion. 

Now, let $\Omega$ be open with $\infty \in \Omega$ and $\om_\rho$ as in Remark \ref{decomp}. Then $\om_\rho^\ast=\om^\ast \cup \rho^{-1}\overline{\D}$. 
Since $\Lambda \cup \rho^{-1}\D$ is a $\om_\rho^\ast$-uniqueness set,  by the previous considerations we have that the span of $\gamma(\Lambda \cup \rho^{-1}\D)$ is dense in $ A^p(\Omega_\rho)$. By  Remark \ref{decomp} the span of $\gamma(\Lambda)$ is dense in $A^p(\Omega)$.

2.  It is easily seen that in case $m_2(\om^\ast)=0$ the denseness of $\Lambda$ in $\om^*$ is necessary for $\gamma(\Lambda)$ to be densely spanning in $A^p(\om)$. Conversely, since $\om^*$ is nowhere dense, denseness of $\Lambda$ in $\om^*$ implies $\om^*$-uniqueness. 
\end{proof}
If $\om^*$ has interior points then $\om^\ast$-uniqueness of $\Lambda$ is in general not necessary for $\gamma(\Lambda)$ to be (even perfectly) spanning in $A^p(\om)$:     
 \begin{example} 
  Let $0<\delta<1$ and $E_\delta:=(1+iC_\delta)$, where $C_\delta$ is the convex hull of the closed curve bounded by  $\{t+i\varphi(t):-\delta \le t\le\delta\}$ with
\[
\varphi(t):=e^{-1/|t|}+1-\sqrt{1-t^2} \quad (-\delta\le t\le \delta)
\]
(where $e^{-\infty}:=0$) and the horizontal line $\{t+i\varphi(\delta) : -\delta\le t\le \delta\}$.  Since each dense subset of $\T$ is a $\D^\ast$-uniqueness set, $\gamma(\T)$ is $\mathcal{U}_0$-perfectly spanning in $A^p(\D)$ for $p<2$. For the crescent-shaped domain $\om:=\D \setminus E_\delta$, however,  $\T$ is no $\om^*$-uniquensess set. On the other hand, the domain $\om$ is so "sharp" near the point $1$ that the polynomials form a dense subspace of $A^2(\om)$ (see Theorem 12.1 in \cite{Mer}; cf. also \cite[p. 29]{Gai}) and thus of $A^p(\om)$ for $p < 2$.  In particular, $A^p(\D)$ is dense in $A^p(\om)$. But then $\gamma(\T)$ is also  $\mathcal{U}_0$-perfectly spanning in $A^p(\om)$ for $p<2$  and, according to Theorem \ref{perfectlySpanningCrit}, the Taylor shift on $A^p(\om)$ is strongly mixing in the Gaussian sense. 
\end{example}

From the second part of Theorem \ref{uniquenessSetApproximation} we obtain a quite complete characterization of the metric dynamics of $T$ for the case of open sets $\om$ with $m_2(\om^*)=0$.
We recall that for any perfect set $A \subset \T$ each point in $A$ is a perfect limit point.  A closed set $A \subset \T$ is said to be \textit{$\mathcal{U}_0$-perfect} if $U \cap A \not \in \mathcal{U}_0$ for all open sets $U$ that meet $A$. In particular, closed sets $A \subset \T$ which have locally positive arc length measure are $\mathcal{U}_0$-perfect.  For any  $\mathcal{U}_0$-perfect set $A \subset \T$  each point in $A$ is a $\mathcal{U}_0$-perfect limit point. 
\begin{thm} \label{weaklyMixingEquiv}
Let $\om\subset \Cinf$ be open with $0, \infty \in \om$ and  $m_2(\om^*)=0$. Furthermore, let  $1 \leq p <2$ and $T$ be the Taylor shift on $A^p(\om)$. 
\begin{enumerate}
\item $T$ is weakly mixing in the Gaussian sense if and only if $\om^*$ is a  perfect subset of  $\T$,
\item $T$ is strongly mixing in the Gaussian sense if and only if $\om^*$ is a  $\mathcal{U}_0$-perfect subset of  $\T$.
\end{enumerate}
\end{thm}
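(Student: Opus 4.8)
The plan is to combine Theorem~\ref{perfectlySpanningCrit} with the approximation result of Theorem~\ref{uniquenessSetApproximation}(2). Under the standing hypothesis $m_2(\om^*)=0$ the space $A^p(\om)$ contains no non-constant holomorphic functions that could "see" interior points, so the question of whether $\gamma(\Lambda)$ spans is governed entirely by denseness of $\Lambda$ in $\om^*$. Since $0,\infty\in\om$, the set $\om^*$ is a compact subset of $\C$ with $0\notin\om^*$ and $\infty^*=0\notin\om^*$; together with $m_2(\om^*)=0$ this forces $\om^*$ to be nowhere dense. I would first record the elementary observation that under these hypotheses $\om^*\subset\T$: indeed any point of $\om^*$ off the unit circle would lie in the interior of $\om^*$ (the eigenvalue set is $\Omega^*$ and contains a neighbourhood of any of its non-boundary points), contradicting nowhere-density — or, more directly, one argues from the characterization of the spectrum recalled in the introduction. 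Actually the cleanest route is: by hypothesis $m_2(\om^*) = 0$, so $\om^*$ has empty interior; but $(\Omega^*)^\circ\subset\sigma_0(T)=\Omega^*$ and in fact $\Omega^*\setminus\T$ is open (its complement $1/(\Cinf\setminus\Omega)$ together with $\T$ is closed after removing... ) — I will phrase this carefully — hence $\om^*\setminus\T=\emptyset$, i.e. $\om^*\subset\T$.

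Granting $\om^*\subset\T$, I apply Theorem~\ref{perfectlySpanningCrit}: $T$ is weakly (resp. strongly) mixing in the Gaussian sense if and only if $\gamma(\om^*\cap\T)=\gamma(\om^*)$ is perfectly spanning (resp. $\mathcal U_0$-perfectly spanning) in $A^p(\om)$. Now $\om^*$ is a closed, hence compact, nowhere dense plane set, so by the remark following the definition of $K$-uniqueness sets, a subset $\Lambda\subset\om^*$ is a $\om^*$-uniqueness set precisely when $\Lambda$ is dense in $\om^*$. Applying Theorem~\ref{uniquenessSetApproximation}(2) (legitimate since $m_2(\om^*)=0$ and $\om$ contains $\infty$): the span of $\gamma(\Lambda\setminus D)$ is dense in $A^p(\om)$ if and only if $\Lambda\setminus D$ is dense in $\om^*$. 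Thus $\gamma(\om^*)$ is perfectly spanning iff $\om^*\setminus D$ is dense in $\om^*$ for every countable $D\subset\T$, and $\mathcal U_0$-perfectly spanning iff $\om^*\setminus D$ is dense in $\om^*$ for every $D\in\mathcal U_0$.

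It remains a purely topological translation. The condition "$\om^*\setminus D$ is dense in $\om^*$ for every countable $D$" is equivalent to: for every non-empty relatively open $U\cap\om^*$, the set $U\cap\om^*$ is uncountable — i.e. every point of $\om^*$ is a perfect limit point of $\om^*$ — which for a closed subset of $\T$ is exactly the statement that $\om^*$ is perfect. (If some relatively open piece were countable one could delete it; conversely if $\om^*$ is perfect then no countable deletion destroys denseness, since removing countably many points from a non-empty-interior-free perfect set leaves a set still dense in it, as each relatively open piece remains uncountable.) Identically, "$\om^*\setminus D$ dense for every $D\in\mathcal U_0$" is equivalent to $U\cap\om^*\notin\mathcal U_0$ for every open $U$ meeting $\om^*$, which is precisely the definition of $\mathcal U_0$-perfect given in the text. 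Combining the two equivalences yields both asserted characterizations.

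The only genuinely delicate point is the reduction $\om^*\subset\T$ — one must make sure that $m_2(\om^*)=0$ together with $0,\infty\in\om$ really forces this, rather than merely forcing $\om^*$ to have empty interior while possibly containing isolated points off $\T$. This is handled by the spectral picture recalled in the introduction: $\sigma(T)=\Omega^*$ and $(\Omega^*)^\circ\subset\sigma_0(T)$ with one-dimensional eigenspaces, so $\Omega^*\setminus\T$, being contained in the interior of $\Omega^*$ whenever nonempty (as $\Omega^*$ is the inversive image of the complement of the open set $\Cinf\setminus\Omega$, which is relatively open away from $\T$), must be empty once $\Omega^*$ has empty interior. I expect this verification, and the precise statement that $\om^*\setminus D$ dense for all countable $D$ is equivalent to $\om^*$ being perfect (using that $\om^*$ is already closed and nowhere dense so there are no interior points to worry about), to be the bulk of the write-up; the mixing input is then immediate from the two cited theorems.
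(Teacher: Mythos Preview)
Your overall plan---combine the converse in Theorem~\ref{perfectlySpanningCrit} with the density criterion of Theorem~\ref{uniquenessSetApproximation}(2) and translate into the notions of perfect and $\mathcal{U}_0$-perfect---is exactly the paper's approach, and once $\om^*\subset\T$ is in place your argument goes through.

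The genuine gap is your attempted reduction $\om^*\subset\T$. You try to derive this \emph{unconditionally} from $m_2(\om^*)=0$ and $0,\infty\in\om$, arguing that $\om^*\setminus\T$ is open (or contained in $(\om^*)^\circ$) and hence empty once $(\om^*)^\circ=\emptyset$. This is false: $\om^*$ is a closed set, so $\om^*\setminus\T=\om^*\cap(\C\setminus\T)$ is merely relatively open in $\om^*$, not open in $\C$. A concrete counterexample: take $\om=\Cinf\setminus K$ with $K$ the standard Cantor set on the segment $[2,3]\subset\R$ (or even $K=\{2\}$). Then $0,\infty\in\om$, $m_2(\om^*)=0$, yet $\om^*=1/K$ lies entirely off $\T$. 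Nothing in the spectral description forces points of $\om^*$ away from $\T$ to be interior; $\Cinf\setminus\om$ is closed, not open, so your parenthetical justification does not parse.

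The inclusion $\om^*\subset\T$ must instead be extracted from the \emph{mixing hypothesis}, and your own ingredients already do the job. If $T$ is weakly mixing in the Gaussian sense, the converse in Theorem~\ref{perfectlySpanningCrit} (valid precisely because $1\le p<2$) gives that $\gamma(\om^*\cap\T)$ is perfectly spanning; in particular (taking $D=\emptyset$) the span of $\gamma(\om^*\cap\T)$ is dense in $A^p(\om)$. Now Theorem~\ref{uniquenessSetApproximation}(2) forces $\om^*\cap\T$ to be dense in $\om^*$, hence $\om^*\subset\overline{\T}=\T$. The same reasoning works verbatim in the strongly mixing case. With this correction, the remainder of your proposal---the topological translation of perfectly spanning into perfect, and of $\mathcal{U}_0$-perfectly spanning into $\mathcal{U}_0$-perfect---is sound and matches the paper.
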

\begin{proof}
If $\om^*\subset \T$ is  perfect, then  $\om^*\setminus D$ is dense in $\om^*$ for all countable sets $D$.  Theorem \ref{perfectlySpanningCrit} and  Theorem \ref{TaylorOnA1} show that $T$ is weakly mixing in the Gaussian sense.
In the same way,  Theorem \ref{perfectlySpanningCrit} and  Theorem \ref{TaylorOnA1} show that $T$ is strongly mixing in the Gaussian sense if $\om^* \subset \T$ is $\mathcal{U}_0$-perfect. 

On the other hand, as noted above, already  hypercylicity of $T$ requires perfectness of $\om^*$. Since $m_2(\om^*)=0$,  Theorem \ref{perfectlySpanningCrit} and Theorem \ref{TaylorOnA1} show that the set  $\om^*$ has to be a subset of $\T$.  
 Moreover,
 \[
\om^*\ni \alpha \to \gamma(\alpha) \in A^p(\om)
\]
 defines a continuous eigenvector field for $T$. The same arguments as in Example 2 of \cite{BayMatArt} show that $\mathcal{U}_0$-perfectness of $\om^*\cap \T$ is necessary for $T$ to be strongly mixing on $A^p(\om)$ for any $1\le p<2$.  
 \end{proof}
\begin{example}\label{arc}

Theorem \ref{weaklyMixingEquiv} implies that for each set $B \subset \T$ which has locally positive arc length measure (as e.g.\ a nontrivial arc) the Taylor shift $T$ on $A^p(\Cinf \setminus B)$ is strongly mixing in the Gaussian sense for $p<2$. If $B$ is perfect but not $\mathcal{U}_0$-perfect then $T$ is weakly mixing but not strongly mixing in the Gaussian sense.   
\end{example}

For the case that $\om^*$ has interior points we can show
\begin{thm}
Let $0\in \om \subset \C_\infty$ be an open set which is either bounded in $\C$ or contains $\infty$. If each component $K$ of $\Omega^*$ is the closure of a simply connected domain $G$ such that the harmonic measure $\omega(\cdot,K \cap \T,G)$ is positive or $G$ meets $\T$ then the  Taylor shift on $A^p(\Omega)$ is strongly mixing in the Gaussian sense for all $p<2$.   
 \end{thm}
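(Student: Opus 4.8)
\medskip

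\noindent The plan is to reduce the statement, via Theorem~\ref{perfectlySpanningCrit}(2), to showing that $\gamma(\om^\ast\cap\T)$ is $\mathcal{U}_0$-perfectly spanning in $A^p(\om)$; and, via Theorem~\ref{uniquenessSetApproximation}(1), this in turn follows once we know that for each fixed $D\in\mathcal{U}_0$ the set $\Lambda:=\om^\ast\cap\T\setminus D$ is a $\om^\ast$-uniqueness set. So suppose $h$ is continuous on $\om^\ast$, holomorphic in $(\om^\ast)^\circ$, and vanishes on $\Lambda$; since $h$ is continuous and $\om^\ast$ is the union of its components, it suffices to show that $h$ vanishes on each component $K$. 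Fix such a $K=\overline{G}$ with $G$ a simply connected domain as in the hypothesis. As $G$ is open and $G\subseteq\om^\ast$, we have $G\subseteq(\om^\ast)^\circ$, so $h$ is holomorphic and bounded on $G$ (bounded because $\om^\ast$ is compact), and $h$ vanishes on $(K\cap\T)\setminus D$ since $K\subseteq\om^\ast$. I will show $h\equiv0$ on $G$; by continuity this gives $h\equiv0$ on $\overline{G}=K$.

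If $G$ meets $\T$, choose a nontrivial open subarc $I\subseteq G$ of $\T$. Since $h$ is holomorphic on a neighbourhood of $I$, the map $t\mapsto h(e^{it})$ is real-analytic on the parameter interval describing $I$, and it vanishes off a Lebesgue-null set (recall $m_1(D)=0$ for $D\in\mathcal{U}_0$), hence vanishes identically. Thus $h$ vanishes on the arc $I$, whose points are accumulation points of $I$ lying in the domain $G$, so $h\equiv0$ on $G$ by the identity theorem.

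If $G$ does not meet $\T$, then $K\cap\T\subseteq\overline{G}\setminus G=\partial G$, and the hypothesis gives $\omega(\,\cdot\,,K\cap\T,G)>0$. Assume, for contradiction, that $h\not\equiv0$ on $G$, and let $\psi\colon\D\to G$ be a Riemann map; then $F:=h\circ\psi$ lies in $H^\infty(\D)$ and is not identically zero, so its boundary function $F^\ast$ (defined $m$-a.e.\ by Fatou's theorem) is nonzero $m$-a.e.\ on $\T$. Since $\psi$ is a homeomorphism of $\D$ onto $G$, for $m$-a.e.\ $\zeta\in\T$ the radial limit $\psi^\ast(\zeta)$ exists and belongs to $\partial G$, and the continuity of $h$ on $\overline{G}$ then forces $F^\ast(\zeta)=h(\psi^\ast(\zeta))$. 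By conformal invariance of harmonic measure, for a Borel set $A\subseteq\partial G$ the quantities $\omega(\psi(0),A,G)$ and $m\big(\{\zeta\in\T:\psi^\ast(\zeta)\in A\}\big)$ are simultaneously positive and simultaneously zero. Taking $A=K\cap\T$ and using that $\omega(\,\cdot\,,K\cap\T,G)$ is a nonnegative harmonic function on $G$ which is positive at one, hence at every, point, we obtain $m\big(\{\zeta:\psi^\ast(\zeta)\in K\cap\T\}\big)>0$. Taking $A=D\cap\partial G$ and invoking the fact that the harmonic measure of a subset of $\partial G$ lying on a rectifiable curve is absolutely continuous with respect to arc length on that curve --- a form of the F.~and M.~Riesz theorem, applied here with the rectifiable curve $\T$ and $m_1(D)=0$ --- we obtain $m\big(\{\zeta:\psi^\ast(\zeta)\in D\}\big)=0$. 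Hence the set of $\zeta\in\T$ with $\psi^\ast(\zeta)\in(K\cap\T)\setminus D$ has positive measure, and for those $\zeta$ we get $F^\ast(\zeta)=h(\psi^\ast(\zeta))=0$, contradicting $F^\ast\neq0$ $m$-a.e. Therefore $h\equiv0$ on $G$.

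In both cases $h$ vanishes on the component $K$; as $K$ was arbitrary, $h\equiv0$. Thus $\Lambda$ is a $\om^\ast$-uniqueness set and, by Theorem~\ref{uniquenessSetApproximation}(1), $\gamma(\Lambda)$ has dense span in $A^p(\om)$; since $D\in\mathcal{U}_0$ was arbitrary, $\gamma(\om^\ast\cap\T)$ is $\mathcal{U}_0$-perfectly spanning, and Theorem~\ref{perfectlySpanningCrit}(2) gives that $T$ is strongly mixing in the Gaussian sense for $1\le p<2$. The case $p=0$ follows in the same spirit from Corollary~\ref{TaylorH}(2): in every case the hypothesis forces $m_1(K\cap\T)>0$ for each component $K$ (directly when $G$ meets $\T$, and by the F.~and M.~Riesz fact otherwise), so $K\cap\T$ has a Lebesgue density point, at which it has locally positive arc length, and hence a $\mathcal{U}_0$-perfect limit point. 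The one substantial ingredient --- and the step I would be most careful about --- is the harmonic-measure statement used in the second case: it is false for arbitrary subsets of $\partial G$ (by Makarov-type constructions), but it holds for subsets of a rectifiable curve, and I would simply quote it from the standard theory of boundary behaviour of conformal maps and harmonic measure.
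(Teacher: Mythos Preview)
Your proof is correct and follows essentially the same route as the paper: reduce via Theorem~\ref{perfectlySpanningCrit} and Theorem~\ref{uniquenessSetApproximation} to showing that $(\om^\ast\cap\T)\setminus D$ is an $\om^\ast$-uniqueness set for every $D\in\mathcal{U}_0$, and then argue component-by-component using the local F.\ and M.\ Riesz theorem together with a boundary-uniqueness principle for bounded holomorphic functions. The paper invokes the two-constant theorem directly where you pass through a Riemann map and $H^\infty$-uniqueness, and you separate out the easy case $G\cap\T\neq\emptyset$ (handled by the identity theorem) a bit more explicitly than the paper does, but these are cosmetic differences.
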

 
 \begin{proof}
From the two-constant-theorem (see e.g. \cite{Ransford}) it follows that for each domain $G$ with non polar boundary sets $A \subset \partial G$ of positive harmonic measure $\omega(\cdot,A,G)$ are uniqueness sets for $G$.  If $K$ is a component of $\Omega^*$ then, according to our assumptions, the local F.\ and M.\ Riesz theorem (see \cite[p. 415]{Garnett}) shows that  $m(K \cap \T)$ is positive. Since each $\mathcal{U}_0$-set $D \subset \om^\ast \cap \T$ has vanishing arc length measure and, again by the local F.\ and M.\ Riesz theorem, also vanishing harmonic measure, $(\om^*\cap \T) \setminus D$ is a $\Omega^*$-uniqueness set for all $D \in \mathcal{U}_0$. Hence, according to  Theorem \ref{perfectlySpanningCrit} and  Theorem \ref{TaylorOnA1} the  Taylor shift on $A^p(\Omega)$ is strongly mixing in the Gaussian sense for all $p<2$.  
\end{proof}
\begin{remark}\label{suff}
Let $\om$ with $0 \in \om$ be the exterior of a rectifiable Jordan curve $\Gamma$. Then the interior $G=(\om^\ast)^\circ$ of $1/\Gamma$ is a Jordan domain with rectifiable boundary  and,  according to the (global) F.\ and M.\ Riesz theorem (see e.g.\ \cite[p. 202]{Garnett}), the harmonic measure of a set $A \subset \partial G$ is positive if and only if the linear measure is positive. For $A \subset \T$ this in turn is equivalent to $A$ having positive arc length measure. Hence, if $m(\om^\ast \cap \T)>0$, then $T$ is strongly mixing in the Gaussian sense for all $p<2$.

Let $\lambda_2$ denote the two-dimensional Lebesgue measure and let 
\[
D_q(G):=\{ h \in H(G): \int_G |h'|^q\, d \lambda_2<\infty\}
\] 
be the Dirichlet space of order $q$ with respect to $G$. In a similar way as in the proof of Theorem 1 in \cite{NY}, by applying Theorem 3, Chapter II, Section 4,  from \cite{Ste}, it can be shown that for Cauchy transforms $Vg$ of functions $g \in L^q(\om)$ as considered in the proof of Theorem   \ref{uniquenessSetApproximation} the restrictions $Vg|_G$ belong to $D_q(G)$
and thus in particular to $D_2(G)$. It is known that for the Dirichlet space $D_2(\D)$ perfect uniqueness sets of vanishing arc length measure exist   
(see \cite[Corollary 4.3.4]{FKMR}). By conformal invariance (cf. \cite[Therorem 1.4.1]{FKMR}), the rectifiable Jordan curve $\Gamma$ can be chosen in such a way that $m(\om^\ast \cap \T)=0$ and that $T$ is weakly mixing in the Gaussian sense for all $p<2$.
\end{remark}

\begin{example} \label{CantorSetPosLebesgueMeasure}
Let $C \subset \T$ be a closed set and consider $\Gamma$ to be a rectifiable Jordan curve in $\C \setminus \D$ with $\Gamma \cap \mathbb{T} = C$ and  so that the exterior $\om$ of $\Gamma$ contains $0$. If $C$ has positive arc length measure then Remark \ref{suff} shows that the Taylor shift $T$  on $A^p(\Omega)$ is strongly mixing in the Gaussian sense for $p< 2$. 
Note that $C$ may be chosen to be a totally disconnected and that $C$ may have isolated points. Moreover, according to the proof of \cite[Corollary 4.3.4]{FKMR}, for an appropriate countable union $C$ of the circular Cantor middle-third set the Taylor shift  weakly mixing in the Gaussian sense for $p<2$.   
\end{example}

\section{Topological dynamics of $T$}

If $\om$ is an open set such that no point of $\T$ is an interior point of $\om$ and if $p \ge2$, the Taylor shift $T$ on $A^p(\om)$ may have no unimodular eigenvalues. This is e.g.\ the case for $\om=\D$. Since $A^2(\om)$ is of cotype 2, Theorem \ref{perfectlySpanningCrit} shows that weak mixing in the Gaussian sense is excluded. We recall that an operator $T$ on a Fr\'echet space $X$ is called \textit{frequently hypercyclic} if the orbit of some point $x$ meets each nonempty open set with positive lower density. Each operator that is weakly mixing with respect to some measure of full support is frequently hypercyclic. 

The space $A^2(\D)$ is isometrically isomorphic to the weighted sequence space $\ell^2(1/(n+1))$ and the Taylor shift is conjugated to the backward shift on $\ell^2(1/(n+1))$ (see \cite[Example 4.4.(b)]{LinChaos}). As a consequence, the Taylor shift is topologically mixing but not frequently hypercyclic on $A^2(\D)$ (\cite[Example 9.18]{LinChaos}). It turns out that a similar result holds for the Taylor shift on more general domains $\om$ and for arbitrary $p$. 

\begin{thm} \label{mixingTaylorShift}
Let $1\le p <\infty$ and $0\in \Omega\subset \Cinf$ be a domain which is either bounded in $\C$ or contains $\infty$. If each component $K$ of $\Omega^*$ is the closure of a simply connected domain containing a rectifiable Jordan curve $\Gamma$ such that the linear measure of $\Gamma \cap \T$ is positive, then the Taylor shift on $A^p(\Omega)$ is topologically mixing.
If, in addition, $\D\subset \om$ then $T$ is not frequently hypercyclic for any $p \ge 2$.
\end{thm}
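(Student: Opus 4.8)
The plan is to establish topological mixing via a rational-approximation argument combined with a standard mixing criterion, and then to establish the failure of frequent hypercyclicity by transferring a comparison estimate from the disc.

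\smallskip

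\textbf{Topological mixing.} First I would reduce to the case where $\Omega$ is bounded in $\C$ using Proposition \ref{thmDirectSumBergmanSpace} and Remark \ref{decomp}: if $\infty \in \Omega$, write $A^p(\om_\rho) = A^p(\om) \oplus A^p(\rho\D)$, and since $T$ acts as the backward shift on $A^p(\rho\D)$ (which is conjugate to a weighted backward shift and hence mixing), it suffices to treat $\om_\rho$, whose components' reciprocals are still closures of simply connected domains meeting $\T$ in a set of positive linear measure together with the extra component $\rho^{-1}\overline{\D}$. Next, I would use the spectral picture: since each component $K$ of $\Omega^*$ is the closure of a simply connected domain, and $T-\alpha I$ is invertible for $\alpha \notin \Omega^*$, the unbounded component of $\C \setminus \sigma(T)$ is nonempty, and I would verify the Kitai/Godefroy--Shapiro-type mixing criterion by exhibiting dense generalized eigenspaces. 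Concretely, the eigenfunctions $\gamma(\alpha)$ with $|\alpha| < 1$ (i.e. $1/\alpha$ outside $\overline{\D}$) span a $T$-invariant set on which $T$ is a contraction in the relevant sense, while for the expanding part one needs vectors $u$ with $T^n u \to 0$; these come from $\gamma(\alpha)$ with $|\alpha| > 1$ lying in $(\Omega^*)^\circ$ or, more delicately, from a dense supply of polynomials-times-$\gamma(\alpha)$. The key input that makes the expanding part dense is a rational-approximation theorem: because $\Gamma \cap \T$ has positive linear measure and $\Gamma$ lies inside a simply connected domain $G$ with $\overline{G} = K$, the F.\ and M.\ Riesz / harmonic-measure argument already used in the proof of Theorem \ref{uniquenessSetApproximation} and Remark \ref{suff} shows that $\Gamma \cap \T$ (or a suitable subset of it of positive measure) is a $\Omega^*$-uniqueness set, hence by Theorem \ref{TaylorOnA1}(1) the span of $\gamma(\Gamma \cap \T)$ — and a fortiori the span of all $\gamma(\alpha)$, $\alpha \in \Omega^*$ — is dense in $A^p(\Omega)$ for every $p$ (note Theorem \ref{TaylorOnA1} is stated for $p < 2$, so for $p \ge 2$ I would instead invoke Hedberg's density of rational functions with poles in $\C \setminus \Omega$, which holds for all $p \ge 1$, together with the uniqueness-set argument). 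From density of $\linspan\gamma(\Omega^* \cap \D^\circ)$ and density of $\linspan\gamma(\Omega^* \setminus \overline{\D})$ one gets mixing by the standard eigenvalue criterion, since on the first span $T^n \to 0$ locally and on the second $S_\alpha^n \to 0$.

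\smallskip

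\textbf{Failure of frequent hypercyclicity for $p \ge 2$.} Here the plan is to compare $A^p(\Omega)$ with $A^p(\D)$ when $\D \subset \Omega$. The inclusion $\iota: A^p(\D) \hookrightarrow A^p(\Omega)$ is \emph{not} available (restriction goes the wrong way), but the natural map is restriction $R: A^p(\Omega) \to A^p(\D)$, $f \mapsto f|_\D$, which is bounded (since $\D \subset \Omega$, $\int_\D |f|^p\, dm_2 \le \int_\Omega |f|^p\, dm_2$, up to the normalization constants of $m_2$ on sphere versus disc) and intertwines the two Taylor shifts: $R \circ T_\Omega = T_\D \circ R$. Moreover $R$ has dense range, because polynomials lie in the range and are dense in $A^p(\D)$. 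Now I would use the known fact (\cite[Example 9.18]{LinChaos}) that $T_\D$ is not frequently hypercyclic on $A^2(\D) \cong \ell^2(1/(n+1))$, and the analogous statement for $A^p(\D)$: the Taylor shift on $A^p(\D)$ is a weighted backward shift on a sequence space with weights $\sim (n+1)^{1/p}$-type growth that is too slow — for $p \ge 2$ the weight sequence is not square-summable-summable in the way required by the Bayart--Grivaux frequent-hypercyclicity criterion, and in fact $T_\D$ on $A^p(\D)$ fails to be frequently hypercyclic for all $p \ge 2$ by the same eigenvalue/averaging obstruction. Then frequent hypercyclicity of $T_\Omega$ would pass to $T_\D$ through the bounded, dense-range, intertwining map $R$ (frequent hypercyclicity is preserved under such quasi-conjugacies, cf.\ \cite[Proposition 2.25]{LinChaos} or the frequent-hypercyclicity version thereof), giving a contradiction.

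\smallskip

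\textbf{Main obstacle.} The delicate point in the mixing part is handling $p \ge 2$, where Theorem \ref{TaylorOnA1} does not directly apply; one must rerun the Cauchy-transform/uniqueness-set argument using Hedberg's theorem (valid for all $p \ge 1$) in place of the $p < 2$ density statement, and check that the harmonic-measure argument identifying $\Gamma \cap \T$ as a uniqueness set is genuinely $p$-independent — it is, since it concerns only the zero set of the Cauchy transform $Vg$, which is holomorphic on $(\Omega^*)^\circ$ regardless of $p$. The delicate point in the second part is establishing that the comparison map $R$ is a genuine quasi-conjugacy with dense range and that the known non-frequent-hypercyclicity of $T_\D$ on $A^p(\D)$ is available for all $p \ge 2$ and not merely $p = 2$; for $p = 2$ this is exactly \cite[Example 9.18]{LinChaos}, and for general $p \ge 2$ it follows from the same weighted-shift analysis since the relevant weights only grow faster (making frequent hypercyclicity harder) as $p$ increases, or alternatively from the cotype-$2$-based obstruction in Theorem \ref{perfectlySpanningCrit} combined with the absence of unimodular eigenvalues for $T_\D$.
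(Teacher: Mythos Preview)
Your mixing argument has a genuine gap. You propose to apply a Godefroy--Shapiro-type eigenvalue criterion by splitting into eigenvectors $\gamma(\alpha)$ with $|\alpha|<1$ (contracting) and $|\alpha|>1$ (expanding), and arguing that each family spans densely. But the hypothesis of the theorem does \emph{not} guarantee that $\Omega^*$ meets both $\D$ and $\C\setminus\overline{\D}$. In the situation of Example~\ref{CantorSetPosLebesgueMeasure} (a rectifiable Jordan curve in $\C\setminus\D$ touching $\T$ in a set of positive measure, with $\Omega$ its exterior) one has $\Omega^*\subset\overline{\D}$, so there are no eigenvalues with $|\alpha|>1$ at all; and since $\D\subset\Omega$ this is precisely the case treated in the second half of the theorem. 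Your sentence ``from density of $\linspan\gamma(\Omega^*\cap\D^\circ)$ and density of $\linspan\gamma(\Omega^*\setminus\overline{\D})$ one gets mixing'' therefore collapses. There is a second problem: for $p\ge 2$ the individual functions $\gamma(\alpha)$ with $\alpha\in\T\cap\partial\Omega^*$ need not lie in $A^p(\Omega)$, so ``span of $\gamma(\Gamma\cap\T)$'' is not even a subset of the space, and the uniqueness-set argument you import from Theorem~\ref{TaylorOnA1} does not give you vectors in $A^p(\Omega)$ to work with.

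The paper resolves both issues simultaneously by replacing individual eigenvectors on $\T$ with their \emph{averages}: for Borel sets $B\subset\Omega^*\cap\T$ one sets $f_B=\int_B\gamma(\alpha)\,dm(\alpha)$. A short computation (Remark~\ref{remarkApproxFunctions}) shows $f_B\in A^p(\Omega)$ for every $p$, because the logarithmic singularity of $\int_\T|\gamma(\alpha)|\,dm(\alpha)$ is $p$-integrable. The crucial dynamical input is that $m_B$ is a Rajchman measure, so that $T^nf_B=\int_B\zeta^n\gamma(\zeta)\,dm(\zeta)\to 0$ and the analogous ``right shift'' $S_n\mu\to 0$ \emph{even though all the eigenvalues sit on $\T$} (Lemma~\ref{iteratesConvergeTo0}); this is what replaces your missing $|\alpha|>1$ eigenvectors. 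Density of $\linspan\{f_B\}$ (Theorem~\ref{approxForCompactSetsp>=2}) then needs the Cauchy-transform/uniqueness argument you sketch, but applied to $Vg$ restricted to the interior of $\Gamma$ and combined with a Dirichlet-space/Hardy--Smirnov boundary-value argument to conclude $Vg\equiv 0$ from its vanishing $m$-a.e.\ on $\T\cap K$. Kitai's criterion then gives mixing.

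Your non-frequent-hypercyclicity argument is essentially the paper's: the restriction $A^p(\Omega)\to A^2(\D)$ (using $p\ge 2$ and finite measure to land in $A^2$) is a quasi-conjugacy with dense range, and $T$ is not frequently hypercyclic on $A^2(\D)$ by \cite[Example~9.18]{LinChaos}. Note, though, that your dense-range argument via polynomials fails when $\infty\in\Omega$ (polynomials are not in $H(\Omega)$ then); the paper instead uses the density of $\linspan\{f_B\}$ just established.
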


\begin{remark}\label{localFandM}
Theorem \ref{mixingTaylorShift} extends a corresponding result in \cite{BM}, where $\om$ is a Caratheodory domain that contains a (nontrivial) subarc of $\T$. It shows, in particular, that in the situation of Example \ref{CantorSetPosLebesgueMeasure} the Taylor shift on $A^p(\om)$ is topologically mixing for all $p\ge 1$ and not frequently hypercyclic for any $p \ge 2$.

According to Example \ref{arc}, for each (nontrivial) arc $B \subset \T$ the Taylor shift on $A^p(\Cinf \setminus B)$ is topologically mixing for $p<2$. We do not know if this is still the case for $p\ge 2$. 
\end{remark}
The remaining part of the section is devoted to the proof of Theorem \ref{mixingTaylorShift}. Our aim is to apply a version of Kitai's Criterion (see \cite[Remark 3.13]{LinChaos}). 

%Kitai's Theorem (see e.g.\ \cite[Theorem 5.6]{LinChaos}) shows that an operator on a Banach space can only be hypercyclic if every component of the spectrum intersects the unit circle. Therefore, we can assume that $\Omega$ is a domain such that $\partial \Omega^*$ meets $\mathbb{T}$.

Let $E\subset  \C$ be compact and let $M(E)$ denote the set of complex measures on the Borel sets of $\C$ with support in $E$. It turns out that the Cauchy transforms of measures $\mu \in M(\Omega^*)$ are of particular interest for analysing the Taylor shift on $A^p(\Omega)$. For $\mu \in M(E)$ the Cauchy transform $C\mu \in H(E^\ast)$ of $\mu$ is defined (in terms of vector valued integration) by  
\[
C\mu:=\int\gamma(\zeta) \, d\overline{\mu}(\zeta)= \int \frac{1}{1-\zeta \, \cdot} \, d\overline{\mu}(\zeta)
\]
 We write $|\mu |$ for the total variation of the measure $\mu$ and set 
\begin{equation*}
\mathcal{M}_p(\om) = \{ \mu \in M(\om^\ast): \int \vert \gamma (\zeta) \vert\, d \vert \mu \vert (\zeta)\in L^p(\Omega) \}
\end{equation*}
as well as
\begin{equation*}
\mathcal{C}_p(\om) = \{C\mu \colon \mu \in \mathcal{M}_p(\om)\}.
\end{equation*}
For $f \in \mathcal{C}_p(\om)$ we denote by $\mathcal{C}^{-1}(f) = \{ \mu \in \mathcal{M}_p(\om): C\mu = f\}$ the set of representing measures for $f$. Note that for $\mathcal{C}_p(\om) \subset  A^p(\Omega)$ since Cauchy transforms of measures $\mu \in \mathcal{M}_p(\om)$ are holomorphic in $\Omega$ and 
\begin{equation*}
|C\mu| \leq \int \vert \gamma (\zeta) \vert\, d \vert \mu \vert (\zeta)\in L^p(\Omega).
\end{equation*}
\begin{lemma} \label{iteratesT^nr_nuOnA^p}
Let $1 \leq p < \infty$ and $0\in \Omega \subset \C_\infty$ be an open set. Then $T(\mathcal{C}_p(\Omega))\subset \mathcal{C}_p(\Omega)$ and for $R \colon \mathcal{M}_p(\om) \to \mathcal{M}_p(\om)$, defined by $d(R\mu )(\zeta)= \overline{\zeta}\,d\mu(\zeta)$, the diagram 
\begin{align*}
\begin{xy}
  \xymatrix{
      \mathcal{M}_p(\om) \ar[r]^R \ar[d]_C    &   \mathcal{M}_p(\om) \ar[d]^C  \\
      \mathcal{C}_p(\om) \ar[r]_T             &   \mathcal{C}_p(\om)   
  }
\end{xy}
\end{align*}
commutes.
\end{lemma}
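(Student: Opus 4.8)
The plan is to verify the two assertions directly from the definitions, the algebraic identity $(T\gamma(\zeta))(z) = \zeta\,\gamma(\zeta)(z)$, and the linearity of the Cauchy transform under vector-valued integration. First I would record the pointwise eigenvalue relation: for $\zeta \in \om^*$ and $z \in \om$ one has $T\gamma(\zeta) = \zeta\gamma(\zeta)$, which is exactly the eigenfunction statement recalled in the introduction. Next, given $\mu \in \mathcal{M}_p(\om)$, I would define $R\mu$ by $d(R\mu)(\zeta) = \overline{\zeta}\,d\mu(\zeta)$ and check that $R\mu \in \mathcal{M}_p(\om)$: since $|\zeta| \le \max_{w \in \C_\infty \setminus \om}|w| =: \rho < \infty$ on $\mathrm{supp}\,\mu \subset \om^*$ (this is finite precisely because $\om^*$ is compact), we have $|R\mu| \le \rho\,|\mu|$ as measures, so $\int |\gamma(\zeta)|\,d|R\mu|(\zeta) \le \rho \int |\gamma(\zeta)|\,d|\mu|(\zeta) \in L^p(\om)$, hence $R\mu \in \mathcal{M}_p(\om)$. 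In particular $R$ maps $\mathcal{M}_p(\om)$ into itself.

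The core of the argument is the commutation $C(R\mu) = T(C\mu)$. The natural route is to pull $T$ inside the vector-valued integral. Because $T$ is a continuous linear operator on $H(\om)$ and $\zeta \mapsto \gamma(\zeta)$ is continuous (indeed holomorphic) from $\om^*$ into $H(\om)$ with the integral $\int \gamma(\zeta)\,d\overline{\mu}(\zeta)$ converging as a Bochner/Pettis integral in $H(\om)$, continuity of $T$ allows interchanging $T$ with $\int$:
\[
T(C\mu) = T\!\left(\int \gamma(\zeta)\,d\overline{\mu}(\zeta)\right) = \int T\gamma(\zeta)\,d\overline{\mu}(\zeta) = \int \zeta\,\gamma(\zeta)\,d\overline{\mu}(\zeta) = \int \gamma(\zeta)\,d\overline{(R\mu)}(\zeta) = C(R\mu),
\]
where in the last-but-one step I use that multiplying the integrand by the scalar $\zeta$ is the same as integrating against $d\overline{(R\mu)} = \overline{\zeta}\cdot\overline{d\mu}$ — recalling that $C\mu$ is defined via $d\overline{\mu}$, so the factor $\zeta$ in the integrand corresponds to $\overline{\zeta}$ applied to $\mu$ via conjugation, which matches the definition $d(R\mu)(\zeta) = \overline{\zeta}\,d\mu(\zeta)$. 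Once this identity is in hand, $T(\mathcal{C}_p(\om)) \subset \mathcal{C}_p(\om)$ is immediate: for $f = C\mu \in \mathcal{C}_p(\om)$ with $\mu \in \mathcal{M}_p(\om)$, we get $Tf = C(R\mu)$ with $R\mu \in \mathcal{M}_p(\om)$, so $Tf \in \mathcal{C}_p(\om)$; and the commutativity of the square is precisely the displayed identity read for all $\mu$.

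The main obstacle I anticipate is the rigorous justification of pulling $T$ through the vector-valued integral, i.e.\ that $\int \gamma(\zeta)\,d\overline{\mu}(\zeta)$ is a genuine integral in $H(\om)$ to which continuous linear operators (and more specifically continuous linear functionals, via Pettis) may be applied termwise. This is a standard fact — $H(\om)$ is a Fréchet space, $\zeta \mapsto \gamma(\zeta)$ is continuous on the compact set $\mathrm{supp}\,\mu$, and $\mu$ is a finite measure, so the integral exists in $H(\om)$ and commutes with continuous linear maps — but it should be stated cleanly, perhaps with a one-line reference, rather than left implicit. A minor bookkeeping point is to be careful with the conjugations ($d\overline\mu$ in the definition of $C\mu$ versus $\overline\zeta$ in the definition of $R\mu$) so that the scalar $\zeta$ in $T\gamma(\zeta) = \zeta\gamma(\zeta)$ lines up correctly with the definition of $R$; this is routine once one writes out $d\overline{(R\mu)}(\zeta) = \overline{\overline\zeta\,d\mu(\zeta)} = \zeta\,d\overline\mu(\zeta)$.
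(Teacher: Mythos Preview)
Your proposal is correct and follows essentially the same route as the paper's proof: bound $|\zeta|$ on $\om^*$ to show $R$ is a self-map of $\mathcal{M}_p(\om)$, then pull the continuous operator $T$ through the vector-valued integral (the paper cites \cite[Exercise 3.24]{RudinFunctionalAnalysis} for this interchange) and use $T\gamma(\zeta)=\zeta\gamma(\zeta)$. One small slip to fix: the constant you wrote, $\rho=\max_{w\in\C_\infty\setminus\om}|w|$, is not the bound you need and can be infinite; the correct constant is $\max_{\zeta\in\om^*}|\zeta|$ (the paper uses $c=\max_{z\in\partial\om^*}|z|$), which is indeed finite by the compactness of $\om^*$ you already invoke.
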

\begin{proof}
Let $1 \leq p < \infty$ and $\mu \in \mathcal{M}_p(\om)$. We first show that $R$ is a self map. For $c:=\max_{z \in \partial\Omega^*} \vert z \vert$ we obtain
\begin{equation*}
\int \vert \gamma(\zeta) \vert d \vert R \mu \vert (\zeta) = \int \vert \gamma(\zeta) \zeta \vert d \vert \mu \vert (\zeta) \leq c \int \vert \gamma(\zeta) \vert d \vert \mu \vert (\zeta).
\end{equation*}
It follows that $R\mu \in \mathcal{M}_p(\om)$. Now, let $f \in \mathcal{C}_p(\om)$ with $\mu \in \mathcal{C}^{-1}(f)$. Since we can interchange integration and $T$ (cf. \cite[Exercise 3.24]{RudinFunctionalAnalysis}), we obtain
\begin{equation*}
Tf = \int T\gamma(\zeta) d\overline{\mu} (\zeta) = \int \zeta\gamma(\zeta) d\overline{\mu} (\zeta) = \int \frac{\zeta}{1-\zeta \cdot} d\overline{\mu} (\zeta),
\end{equation*}
i.e. $Tf = CR\mu$. Since $R$ is a self map on $\mathcal{M}_p(\om)$, it follows that $Tf \in \mathcal{C}_p(\partial\Omega^*)$.
\end{proof}
Inductively, from Lemma \ref{iteratesT^nr_nuOnA^p} we obtain
\begin{equation}\label{T_n}
T^n f = \int\zeta^n \gamma(\zeta) d \overline{\mu} (\zeta)
\end{equation}
 for $f \in \mathcal{C}_p(\om)$, $\mu \in \mathcal{C}^{-1}(f)$ and $n \in \mathbb{N}$.
In view of Kitai's criterion, our aim is to find measures $\mu$ such that $T^n(C\mu)$ converges to 0 in $A^p(\Omega)$. We shall see that this is the case if $\mu \in \mathcal{M}_p(\om)$ is supported on $\Omega^* \cap \mathbb{T}$ and a Rajchman measure. We recall that a Borel measure $\nu$ supported on $\T$ is called a Rajchman measure if the Fourier-Stieltjes coefficients 
$\hat{\nu}(k)=\int\zeta^k \, d\nu(\zeta)$ tend to $0$ as  $k$ tends to $ \pm \infty$ (see e.g. \cite{DescriptiveSetTheory}). 

Again according to Kitai's criterion, we also need a kind of right inverse of $T$: If $\mu \in  \mathcal{M}_p(\om)$ is a measure with support in $\T$ we define 
\begin{equation} \label{definitionS_n}
S_n \mu: = \int \frac{\gamma(\zeta)}{\zeta^n} d\overline{\mu} (\zeta)= \int \frac{d\overline{\mu} (\zeta)}{\zeta^n(1-\zeta \cdot)}
\end{equation}
for all $n \in \mathbb{N}$. As in the proof of Lemma \ref{iteratesT^nr_nuOnA^p} it is seen that $S_n \mu \in \mathcal{C}_p(\Omega)$.

\begin{lemma} \label{iteratesConvergeTo0}
Let $1\leq p < \infty$ and $0\in \Omega\subset\C_\infty$ be an open set which is either bounded in $\C$ or contains $\infty$. Furthermore, let $T$ be the Taylor shift operator on $A^p(\Omega)$. If $f \in \mathcal{C}_p(\om)$ such that $f$ is represented by a Rajchman measure $\mu_f \in \mathcal{C}^{-1}(f)$ supported on $ \Omega^* \cap \mathbb{T}$ then 
$T^n f \to 0$ and $S_n \mu_f \to 0$
in $A^p(\Omega)$ as $n \to \infty$.
\end{lemma}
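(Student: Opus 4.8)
The plan is to prove both convergences by the same two-step scheme: first establish pointwise convergence $(T^nf)(w)\to 0$ and $(S_n\mu_f)(w)\to 0$ for every $w\in\Omega$ off the unit circle, and then upgrade this to $\Vert\cdot\Vert_p$-convergence by an $L^p$-dominated convergence argument.

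For the domination, I would note that by \eqref{T_n} and \eqref{definitionS_n}, and since evaluation at a point $w\in\Omega$ is a continuous functional on $A^p(\Omega)$, we have $|(T^nf)(w)| \le \int |\zeta|^n\,|\gamma(\zeta)(w)|\,d|\mu_f|(\zeta)$ and $|(S_n\mu_f)(w)| \le \int |\zeta|^{-n}\,|\gamma(\zeta)(w)|\,d|\mu_f|(\zeta)$ for $w\in\Omega$. Since $\operatorname{supp}\mu_f\subset\Omega^*\cap\T$, the factors $|\zeta|^{\pm n}$ are identically $1$ on the range of integration, so both sequences are pointwise dominated by the single function $g:=\int|\gamma(\zeta)(\cdot)|\,d|\mu_f|(\zeta)$, and $g\in L^p(\Omega)$ is exactly the statement that $\mu_f\in\mathcal M_p(\Omega)$. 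Hence $|T^nf|^p$ and $|S_n\mu_f|^p$ are all bounded by the fixed integrable function $g^p$.

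For the pointwise statement, fix $w\in\Omega$ with $|w|\neq 1$, so that $1/w\notin\T$ and $\zeta\mapsto\gamma(\zeta)(w)=1/(1-\zeta w)$ is continuous on $\T$. Then $\nu_w:=\gamma(\cdot)(w)\,\overline{\mu_f}$ is a finite complex Borel measure on $\T$, and I claim it is a Rajchman measure. Indeed $\overline{\mu_f}$ is Rajchman because its Fourier--Stieltjes coefficients are $\overline{\widehat{\mu_f}(-k)}$, and the Rajchman class $M_0(\T)=\{\nu:\widehat\nu(k)\to 0\text{ as }|k|\to\infty\}$ is stable under multiplication by continuous functions: this is immediate for trigonometric polynomials, where $\widehat{p\nu}(k)$ is a finite linear combination of shifts of $\widehat\nu$, and it extends to all of $C(\T)$ because $M_0(\T)$ is norm-closed in $M(\T)$ and $\Vert\widehat\nu\Vert_\infty\le\Vert\nu\Vert$. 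Consequently $\nu_w\in M_0(\T)$, and since $(T^nf)(w)=\int\zeta^n\gamma(\zeta)(w)\,d\overline{\mu_f}(\zeta)=\widehat{\nu_w}(n)$ and $(S_n\mu_f)(w)=\widehat{\nu_w}(-n)$, both tend to $0$ as $n\to\infty$.

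To conclude, observe that $\{w:|w|=1\}$ is a great circle of the sphere and hence $m_2$-null, so by the previous paragraph $(T^nf)(w)\to 0$ and $(S_n\mu_f)(w)\to 0$ for $m_2$-a.e.\ $w\in\Omega$; combining this with the pointwise bound by $g^p\in L^1(\Omega)$, the dominated convergence theorem yields $\Vert T^nf\Vert_p^p=\int_\Omega|T^nf|^p\,dm_2\to 0$ and likewise $\Vert S_n\mu_f\Vert_p\to 0$. There is no deep obstacle here; the one point that is not pure bookkeeping is the stability of the Rajchman class under multiplication by the continuous kernel $\gamma(\cdot)(w)$, which is precisely where the hypothesis $\operatorname{supp}\mu_f\subset\T$ (as opposed to merely $\operatorname{supp}\mu_f\subset\Omega^*$) is used; once that is available, the $L^p$-bound furnished by $\mu_f\in\mathcal M_p(\Omega)$ does the rest.
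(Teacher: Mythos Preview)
Your proof is correct and follows the same two-step scheme as the paper: pointwise convergence via the Rajchman property of the measure $\gamma(\cdot)(w)\,\overline{\mu_f}$, followed by dominated convergence using the $L^p$-majorant coming from $\mu_f\in\mathcal{M}_p(\Omega)$. The only cosmetic difference is that the paper obtains the Rajchman property of $\nu_w$ by citing the fact that any measure absolutely continuous with respect to a Rajchman measure is again Rajchman (which covers $L^1$ densities and hence works for every $w\in\Omega$ directly), whereas you give a self-contained argument via trigonometric approximation of the continuous density $\gamma(\cdot)(w)$, at the cost of excluding the $m_2$-null set $\Omega\cap\T$.
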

\begin{proof}
Let $f \in \mathcal{C}_p(\om)$ and $\mu_f \in \mathcal{C}^{-1}(f)$ such that $\mu_f$ is a Rajchman measure supported on $B:=\Omega^* \cap \mathbb{T}$. We fix $z \in \om$. By \eqref{T_n} we have 
\begin{equation*}
T^nf(z) = \int\limits_{\partial \Omega^*} \frac{\zeta^n}{1-\zeta z}d\overline{\mu_f} (\zeta) = \int\limits_B \frac{\zeta^n}{1-\zeta z}d\overline{\mu_f} (\zeta)
\end{equation*}
for all $n \in \mathbb{N}$. Because $\mu_f \in \mathcal{M}_p(\om)$ is supported on $B \subset \mathbb{T}$, the function $\gamma(z)$ belongs to $L^1(\mathbb{T}, \vert \mu_f \vert)$. Since $\mu_f$ is a Rajchman measure and 
$\mu_{f,z}$ with 
\[
d\mu_{f,z} := \gamma(z)\, d\overline{\mu_f}
\]
 is absolutely continuous with respect to $\overline{\mu_f}$, \cite[Lemma 4, p.\ 77]{DescriptiveSetTheory} yields that $\mu_{f,z}$ is a Rajchman measure as well. Thus,  we have
\begin{equation*}
T^n f(z) = \int \zeta^{n} \gamma(z)(\zeta) d\overline{\mu_f}(\zeta) = \int \zeta^{n} d \mu_{f,z}(\zeta) = \hat{\mu}_{f,z}(-n) \to 0 
\end{equation*}
and 
\begin{equation*}
S_n \mu_f(z) = \int \zeta^{-n} \gamma(z)(\zeta) d\overline{\mu_f}(\zeta) = \int \zeta^{-n} d \mu_{f,z}(\zeta) = \hat{\mu}_{f,z}(n) \to 0
\end{equation*}
as $n$ tends to $\infty$. Furthermore, for all $ n \in \mathbb{N}$ we have 
\begin{equation*}
\vert T^n f (z) \vert \leq \int \vert \gamma(\zeta) \vert\, d\vert \mu_f \vert (\zeta) \quad \text{and} \quad \vert S_n \mu_f (z) \vert \leq \int \vert \gamma(\zeta)\vert\,  d\vert \mu_f \vert (\zeta) 
\end{equation*}
where $\int_{B} \vert \gamma(\zeta)\vert\, d\vert \mu_f \vert (\zeta)$ is $p$-integrable on $\Omega$ by assumption. Lebesgue's theorem of dominated convergence yields that $\Vert T^n f \Vert_p \to 0$  and $\Vert S_n \mu_f \Vert_p \to 0$ as $n$ tends to $\infty$.
\end{proof}

%We now want to use this type of functions in order to approximate functions in the Bergman spaces for $p \geq 2$.

As noted in the introduction, for $p \ge 2$ and  $\zeta \in \partial \om^*$ the functions $\gamma(\zeta)$ are in general not $p$-integrable.  We introduce appropriate  means of the $\gamma(\zeta)$ which turn out to be integrable for all $p$. 

\begin{remark} \label{remarkApproxFunctions}
It is easily seen (see e.g. \cite[Theorem 1.7]{Hedenmalm}) that 
\begin{equation*}
\int_\mathbb{T} \frac{dm(\alpha)}{\vert 1 - \alpha z \vert} = O \left( \log \frac{1}{1 - \vert z \vert}\right) \quad (\vert z \vert \to 1^-).
\end{equation*}
Since, for all $p \geq 1$,
\begin{equation*}
\int_\mathbb{D}\left| \log\frac{1}{1-\vert z \vert} \right| ^p dm_2(z) \leq \frac{1}{\pi}\int_0^1 \left| \log\frac{1}{1-r} \right|^p dr<\infty,
\end{equation*} 
by symmetry we obtain that 
\begin{equation*}
\int \vert \gamma (\alpha)\vert d m(\alpha) \in L^p(\C_\infty \setminus \T,m_2).
\end{equation*}
For a Borel set $B \subset \mathbb{T}$ we define  $dm_B = 1_B dm$ and 
\begin{equation} \label{approxFunctions}
f_{B} \coloneqq Cm_B = \int \gamma(\alpha)\, dm_B(\alpha) = \int_{B} \frac{d m(\alpha)}{1 - \alpha \, \cdot} \in H(\overline{B}^\ast).
\end{equation} 
Let now $\Omega \subset \C_\infty$ be an open set and $\Omega^* \cap \mathbb{T} \neq \emptyset$. Then, for all Borel sets $B \subset\Omega^* \cap \mathbb{T}$ and for all $1 \leq p < \infty$
\begin{equation*}
\int \vert \gamma (\alpha)\vert\, d m_B(\alpha) \in L^p(\Omega,m_2),
\end{equation*}
 which yields that $m_B$ is a measure in $\mathcal{M}_p(\om)$ supported on $B$ and hence $f_B \in \mathcal{C}_p(\om)$. Since $1_B \in L^1(\mathbb{T})$ and the arc length measure is a Rajchman measure, Theorem \cite[Lemma 4, p. 77]{DescriptiveSetTheory} yields that $m_B$ is a Rajchman measure as well. 
\end{remark}

The following result shows that under the conditions of Theorem \ref{mixingTaylorShift} the functions $f_B$ densely span $A^p(\om)$. 
%For the proof we refer to Section  \ref{proofs}.
 
\begin{thm} \label{approxForCompactSetsp>=2}
Let $1 \leq p < \infty$ and $0 \in \Omega\subset \Cinf$ be a domain which is either bounded in $\C$ or contains $\infty$. If each component $K$ of $\Omega^*$ is the closure of a simply connected domain containing a rectifiable Jordan curve $\Gamma$ such that the linear measure of $\Gamma \cap \T$ is positive, then the
span of $\{f_{B}: B \subset \Omega^* \cap \mathbb{T}\}$ is dense in $A^p(\Omega)$.  
\end{thm}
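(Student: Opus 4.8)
The plan is to use a duality argument parallel to the proof of Theorem~\ref{uniquenessSetApproximation}, but now accommodating arbitrary $p \ge 1$ by working with the averaged functions $f_B$ rather than the (possibly non-integrable) eigenfunctions $\gamma(\alpha)$. First I would reduce to the bounded case exactly as before: if $\infty \in \om$, pass to $\om_\rho = \om \cap \rho\D$ and invoke Remark~\ref{decomp} together with Proposition~\ref{thmDirectSumBergmanSpace}, after observing that $\{f_B : B \subset \om_\rho^* \cap \T\}$ contains, in its span, both a family spanning the $A^p(\rho\D)$ summand (taking $B \subset \rho^{-1}\partial\D$... but that has measure zero, so instead one uses $f_B$ with $B$ a genuine subset of $\T$, noting $\T \subset \om_\rho^*$ when $\rho > 1$, and separately handles the disc component via $\gamma(\rho^{-1}\D) \subset A^p(\rho\D)$). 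So from now on assume $\om$ bounded in $\C$.

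Now let $\ell \in A^p(\om)'$ annihilate every $f_B$ with $B \subset \om^* \cap \T$ Borel. By Hahn--Banach and the description of $L^p$ duals, represent $\ell(h) = \int_\om h\,\overline{g}\,dm_2$ for some $g \in L^q(\om)$, $q$ the conjugate exponent. Interchanging $\ell$ with the vector-valued integral defining $f_B = \int \gamma(\alpha)\,dm_B(\alpha)$ (justified since $\int |\gamma(\alpha)|\,dm_B(\alpha) \in L^q$-pairs-with-$L^p$ ... more precisely since $\int\!\int_\om |\gamma(\alpha)(z)|\,|g(z)|\,dm_2(z)\,dm_B(\alpha) < \infty$ by Remark~\ref{remarkApproxFunctions} and Fubini), I get
\[
0 = \ell(f_B) = \int_B (Vg)(\alpha)\,dm(\alpha) \qquad \text{for every Borel } B \subset \om^* \cap \T,
\]
where $Vg$ is the Cauchy transform from the proof of Theorem~\ref{uniquenessSetApproximation}. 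Hence $Vg = 0$ $m$-a.e.\ on $\om^* \cap \T$. Since, as recorded there, $Vg$ is continuous on all of $\C$ and holomorphic on the interior of each component $K$ of $\om^*$, I want to conclude $Vg \equiv 0$ on $\om^*$. This is where the geometric hypothesis enters: each component $K = \overline{G}$ with $G$ simply connected, $G$ containing a rectifiable Jordan curve $\Gamma$ with $|\Gamma \cap \T| > 0$. The curve $\Gamma$ bounds a Jordan subdomain $G' \subset G$ with rectifiable boundary; by the F.\ and M.\ Riesz theorem (as in Remark~\ref{suff}), the subset of $\partial G'$ lying on $\T$, having positive linear measure, has positive harmonic measure $\omega(\cdot, \cdot, G')$. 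Since $Vg|_{G'}$ is holomorphic (indeed in $D_2(G')$ by the Stein-type estimate cited in Remark~\ref{suff}, though mere continuity up to $\partial G' \cap \T$ plus vanishing there suffices via the two-constants theorem / boundary uniqueness for functions in such Dirichlet-type classes), vanishing $m$-a.e.\ on $\Gamma \cap \T$ forces $Vg \equiv 0$ on $G'$, hence on $G$, hence on $K$ by continuity. As $K$ was an arbitrary component, $Vg|_{\om^*} = 0$, so $\ell(\gamma(\alpha)) = (Vg)(\alpha) = 0$ for all $\alpha \in \om^*$; by Hedberg's theorem (\cite{Hedberg}, as quoted in the proof of Theorem~\ref{uniquenessSetApproximation}) rational functions with simple poles off $\om$ are dense in $A^p(\om)$, whence $\ell = 0$ and the span of the $f_B$ is dense.

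The main obstacle is the boundary-uniqueness step: concluding $Vg \equiv 0$ from its vanishing $m$-a.e.\ on a positive-measure subset of $\T \cap \partial G'$. For $p \le 2$ one could instead route through Theorem~\ref{uniquenessSetApproximation} directly, since there $\gamma(\alpha) \in A^p$ for $\alpha \in \partial\om^*$ and one argues with $\om^*$-uniqueness sets; but for $p \ge 2$ one genuinely needs that $Vg$ restricted to $G'$ lies in a function class (the Dirichlet space $D_2(G')$, via Stein's Theorem~3, Ch.~II §4 of \cite{Ste}, pulled back conformally as in \cite{NY}) for which sets of positive arc-length measure on the rectifiable boundary are uniqueness sets. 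Assembling this — the $D_q(G)$ membership of Cauchy transforms of $L^q(\om)$ densities, its conformal transport to $G'$, and the F.\ and M.\ Riesz conversion of linear measure to harmonic measure — is the technical heart; everything else (the reduction to bounded $\om$, Fubini, Hedberg density, Hahn--Banach) is routine.
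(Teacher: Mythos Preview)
Your overall architecture matches the paper's, but there is a genuine gap at the key step. You write that $Vg$ ``is continuous on all of $\C$'' as recorded in the proof of Theorem~\ref{uniquenessSetApproximation}. That continuity, however, came from writing $Vg$ as a convolution of $1_\Omega g\in L^q$ with $w\mapsto 1/w\in L^p$, and the latter membership requires $p<2$. For $p\ge 2$ one has $q\le 2$, and the Cauchy transform of an $L^q$ density need not be defined---let alone continuous---at boundary points of $\Omega^*$. The paper flags this explicitly and works around it: first it uses H\"older and Remark~\ref{remarkApproxFunctions} to show only that the maximal Cauchy transform is finite $m$-a.e.\ on $\Omega^*\cap\T$, so that $Vg(\alpha)$ is merely defined $m$-a.e.\ there, and then obtains $Vg=0$ $m$-a.e.\ on $\Omega^*\cap\T$ from the $\ell(f_B)=0$ condition. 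The two-constants shortcut you suggest is therefore unavailable.

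What actually closes the argument is the route you treat as an aside: one shows $Vg|_G\in D_q(G)\subset E^1(G)$ (Stein/\cite{NY}, then $D_q\subset H^q$ and \cite[Theorem 10.1]{Duren}), and then one must \emph{identify the $m$-a.e.\ pointwise values $Vg(\alpha)$ on $\Gamma\cap\T$ with the non-tangential boundary limits of $Vg|_G$}. The paper does this via the cone-point structure of rectifiable curves and an argument \`a la \cite[Theorem 3.2.4]{FKMR}; only then does boundary uniqueness for $E^1(G)$ (\cite[Theorem 10.3]{Duren}) give $Vg|_G=0$. Your proposal skips this identification. Finally, without continuity you only obtain $Vg|_{(\Omega^*)^\circ}=0$, i.e.\ $\ell(\gamma(\alpha))=0$ for $\alpha$ with $1/\alpha\in\C\setminus\overline{\Omega}$; so you need Hedberg's corollary on density of rationals with poles off $\overline{\Omega}$ (using that $\Omega$ is a domain, hence has empty inner boundary), not the version quoted in Theorem~\ref{uniquenessSetApproximation}.
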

\begin{proof}
We first assume that $\Omega$ is bounded in $\C$ and fix $\ell \in A^p(\Omega)'$ with $\ell(f_{B}) = 0$ for all Borel sets $B \subset \Omega^* \cap \mathbb{T}$. Again, according to the Hahn-Banach theorem there exists a function $g \in L^q(\Omega)$, where $q$ is the conjugated exponent,  such that 
\begin{equation*}
\ell (f) =  \int_{\Omega} f\overline{g} \, dm_2 
\end{equation*}
for all $f \in A^p(\Omega)$, and for $1_\Omega g d m_2\in M(\overline{\Omega})$ the Cauchy transform 
\begin{equation*}
(Vg)(\alpha) = \int_{\Omega} \frac{\overline{g}(\zeta)}{1 - \zeta \alpha } dm_2(\zeta)
\end{equation*}
of $1_\Omega g d m_2$ is holomorphic in the interior of $\om^\ast$. However, since  $q\le 2$ for $p\ge 2$, it is no longer guaranteed that the Cauchy integral is defined and continuous on $\C$.

 Since $\int_{\om^\ast \cap\T} \vert \gamma( \alpha) \vert\, dm( \alpha) \in L^p(\Omega)$, Hölder's inequality yields
\[
\int_{\Omega}  \int_{\om^\ast \cap\T} \vert \frac{g(\zeta)}{1 - \zeta \alpha } \vert \, d m( \alpha) \, d m_2(\zeta) \leq \|g\|_q \cdot \|\int_{\om^\ast \cap\T} \vert \gamma( \alpha) \vert\, dm( \alpha) \|_p < \infty.
\]
Hence the maximal Cauchy transform  
\[
\int_{\Omega} \frac{|g(\zeta)|}{|1 - \zeta \alpha| } dm_2(\zeta)
\]
is finite for $m$-almost all on $\alpha \in \om^\ast \cap \T$ and $Vg$ exists $m$-almost everywhere on $\om^\ast \cap \T$. Moreover, for all Borel sets $B \subset \Omega^* \cap \mathbb{T}$ we may apply Fubini's theorem to get
\begin{equation*}
0=\ell(f_{B}) = \int_{\Omega} \int_B \frac{dm ( \alpha)}{1 - \zeta  \alpha}\, \overline{g}(\zeta)\, dm_2(\zeta) = \int_B Vg ( \alpha)\, dm( \alpha).
\end{equation*}
This implies that  $Vg = 0$ $m$-almost everywhere on $\Omega^* \cap \mathbb{T}$.

 %In particular, $C\mu $ vanishes on a subset of $\Omega^*\cap \mathbb{T}$ of positive linear measure. %Because of the continuity of $C\mu$ on $\C$, we obtain $C\mu \vert_{\Omega^* \cap \mathbb{T}} \equiv 0$.

Let $G$ be a bounded simply connected domain in $\C$ and let $D_q(G)$ denote the the Dirchlet space of order $q$  defined as in Remark \ref{suff}.
Fixing a point $\beta \in G$, we equip $D_q(G)$ with the (complete) norm
\[
\| h\|_{q}=|h(\beta)|+\Big(\int_G |h'|^q\, d\lambda_2\Big)^{1/q}.
\]
If $\vp$ is the conformal mapping from $\D$ to $G$ with $\vp(0)=\beta$ and $\vp'(0)>0$ then  
\[
h \mapsto (h \circ \vp)(\vp')^{2-q}
\] 
defines an isomorphism between $D_q(G)$ and the Dirichlet space $D_q:=D_q(\D)$ on the unit disc. It is known that $D_q \subset H^q$, where $H^q$ denotes the Hardy space of order $q$ (see e.g. \cite[p.\ 88]{CimaRoss}). In particular, for $h \in D_q(G)\subset D_1(G)$ we have $(h\circ \vp)\vp' \in H^1$, which in turn implies that $h$ belongs to the Hardy-Smirnov space $E^1(G)$ (see \cite[Corollary to Theorem 10.1]{Duren}).

Let now $K$ be a component of $\om^*$ and $G$ the interior of $\Gamma$. Then  the harmonic measure $\omega(\cdot, K \cap \T,G)$ is positive or $G$ meets $\T$. In a similar way as in the proof of Theorem 1 in \cite{NY}, by applying Theorem 3, Chapter II, Section 4,  from \cite{Ste} it can be shown that $Vg|_G$ belongs to $ D_q(G)$. Since $Vg=0$ $m$-almost everywhere on $\om^\ast \cap \T$, in the case of positive  harmonic measure $\omega(\cdot, K \cap \T,G)$ the local F.\ and M.\ Riesz theorem implies that $Vg$ vanishes on a subset of $K \cap \T$ of positive harmonic measure (cf. Remark \ref{suff}). 

Since $\Gamma:=\partial G$ is a rectifiable Jordan curve, the set of cone points of $\Gamma$ has full linear measure (see \cite[Corollary 1.3 and p.\ 207]{Garnett} or \cite[Section 3.5]{Duren}). Hence $m$-almost every point in $K \cap \T$ is a cone point. it With similar arguments as in the proof of Theorem 3.2.4 in \cite{FKMR} it can be shown  that  the non-tangential limit of $Vg$ at $\alpha$ coincides with $(Vg)(\alpha)$ and hence with $0$ at $m$-almost all $\alpha \in K \cap \T$. From \cite[Theorem 10.3]{Duren} we obtain that $Vg|_G = 0$. As $K$ was an arbitrary component of $\om^*$, it follows that $Vg|_{(\om^*)^\circ}=0$ and then
 \[
 \ell (\gamma(\alpha))=(Vg)(\alpha)= 0
 \]
  for all $\alpha  \in (\om^*)^\circ$. Since, by assumption, $\Omega$ is a domain, it follows that the inner boundary of $\overline{\Omega}$ is empty, which allows to apply  \cite[Corollary  p.\ 162]{Hedberg} showing that the rational functions with (simple) poles in $\C \setminus \overline{\om}$ are dense in $A^p(\om)$. This implies that $\ell=0$ and thus the denseness of $\linspan\{f_{B}: B \subset \Omega^* \cap \mathbb{T}\}$  in $A^p(\Omega)$. 

Along the same lines, we get in case of $\om$ containing $\infty$ and $\om_\rho:=\om \cap \rho\D$ that the span of $\{f_{B}: B \subset \Omega^* \cap \mathbb{T}\} \cup \gamma (\rho^{-1}\D)$
is dense in $A^p(\Omega_\rho)$. According to Remark \ref{decomp}, the span of $\{f_{B}: B \subset \Omega^* \cap \mathbb{T}\}$ is dense in $A^p(\Omega)$.
\end{proof}

\begin{proof}[Proof of Theorem \ref{mixingTaylorShift}]
By Theorem \ref{approxForCompactSetsp>=2}, the span $L$ of  
$ \{f_{B}: B \subset \Omega^* \cap \mathbb{T}\}$
is dense in $A^p(\Omega)$. Furthermore, for $f=\sum_B\lambda_B f_B \in L$ let $(S_n \mu_f)_{n \in \mathbb{N}}$ be the sequence  defined in (\ref{definitionS_n}) with $\mu_f=\sum_B \lambda_B m_B$ . Then Lemma \ref{iteratesConvergeTo0} yields that $\Vert T^n f \Vert_p$ and $\Vert S_n \mu_f \Vert_p$ converge to 0 as $n\to \infty$. Hence, applying Lemma \ref{iteratesT^nr_nuOnA^p} and interchanging integration and $T^n$ we obtain  for $n \in \mathbb{N}$ 
\begin{equation*}
T^nS_n \mu_f = T^n \left( \int \gamma(\zeta)\zeta^{-n}\, dm(\zeta)\right) = \int T^n\gamma(\zeta)\zeta^{-n}\, dm(\zeta) = f.
\end{equation*}
The Kitai criterion in the version \cite[Exercise 3.1.1 or Remark 3.13]{LinChaos} yields the assertion.

Finally, the denseness of the span of  $\{f_{B}: B \subset \Omega^* \cap \mathbb{T}\}$ implies that in the case $\D \subset \om$ and $p\ge 2$ the space $A^p(\om)$ is (continuously and) densely embedded in $A^2(\D)$. Since the Taylor shift is not frequently hypercyclic on $A^2(\D)$ (\cite[Example 9.18]{LinChaos}) it is also not frequently hypercyclic on $A^p(\om)$. 
\end{proof}

%\section{Mean Approximation} \label{proofs}

\bibliographystyle{apalike}
\bibliography{disbib}{}\vspace*{2em}

\begin{thebibliography}{}

\bibitem[Albiac and Kalton, 2006]{TopicsInBanachSpaceTheory}
Albiac, F. and Kalton, N.~J. (2006).
\newblock {\em Topics in Banach Space Theory}.
\newblock Springer.

\bibitem[Bayart and Matheron, 2009]{BayartMatheron}
Bayart, F. and Matheron, E. (2009).
\newblock {\em Dynamics of Linear Operators}.
\newblock Cambridge University Press.

\bibitem[Bayart and Matheron, 2016]{BayMatArt}
Bayart, F. and Matheron, E. (2016).
\newblock {Mixing operators and small subsets of the circle}.
\newblock {\em J. Reine Angew. Math.}, 715:75--123.

\bibitem[Beise et~al., 2014]{BMM}
Beise, H.-P., Meyrath, T., and Müller, J. (2014).
\newblock {Mixing Taylor shifts and universal Taylor series}.
\newblock {\em Bull. London Math. Society}, 47:136--142.

\bibitem[Beise and Müller, 2016]{BM}
Beise, H.-P. and Müller, J. (2016).
\newblock {Generic boundary behaviour of Taylor series in Hardy and Bergman
  spaces}.
\newblock {\em Math. Z.}, 284:1185--1197.

\bibitem[Bers, 1970]{BersL1Approximation}
Bers, L. (1970).
\newblock {$L_1$ approximation of analytic functions}.
\newblock {\em J. Indian Math. Soc.}, 34:193--201.

\bibitem[Cima and Ross, 2000]{CimaRoss}
Cima, J.~A. and Ross, W.~T. (2000).
\newblock {\em The Backward Shift on the Hardy Space}.
\newblock American Mathematical Society.

\bibitem[Duren, 2000]{Duren}
Duren, P.~L. (2000).
\newblock {\em Theory of $H^p$ Spaces}.
\newblock Dover Publications.

\bibitem[Duren and Schuster, 2004]{DurenSchuster}
Duren, P.~L. and Schuster, A. (2004).
\newblock {\em Bergman Spaces}.
\newblock American Mathematical Society.

\bibitem[El-Fallah et~al., 2014]{FKMR}
El-Fallah, O., Kellay, K., Mashreghi, J., and Ransford, T. (2014).
\newblock {\em A Primer on the Dirichlet Space}.
\newblock Cambridge University Press.

\bibitem[Gaier, 1980]{Gai}
Gaier, D. (1980).
\newblock {\em Vorlesungen ueber Approximation im Komplexen}.
\newblock Birkhaeuser.

\bibitem[Garnett and Marshall, 2005]{Garnett}
Garnett, J.~B. and Marshall, D.~E. (2005).
\newblock {\em Harmonic Measure}.
\newblock Cambridge University Press.

\bibitem[Grosse-Erdmann and Peris, 2011]{LinChaos}
Grosse-Erdmann, K.-G. and Peris, A. (2011).
\newblock {\em Linear Chaos}.
\newblock Springer.

\bibitem[Hedberg, 1972]{Hedberg}
Hedberg, L.-I. (1972).
\newblock {Approximation in the mean by analytic functions}.
\newblock {\em Trans. Amer. Math. Soc.}, 163:157--171.

\bibitem[Hedenmalm et~al., 2000]{Hedenmalm}
Hedenmalm, H., Korenblum, B., and Zhu, K. (2000).
\newblock {\em Theory of Bergman Spaces}.
\newblock Springer.

\bibitem[Kechris and Louveau, 1987]{DescriptiveSetTheory}
Kechris, A. and Louveau, A. (1987).
\newblock {\em Descriptive Set Theory and the Structure of Sets of Uniqueness}.
\newblock Cambridge University Press.

\bibitem[Luecking and Rubel, 1984]{LueckingRubel}
Luecking, D.~H. and Rubel, L.~A. (1984).
\newblock {\em Complex Analysis: A Functional Analysis Approach}.
\newblock Springer.

\bibitem[Mergelyan, 1962]{Mer}
Mergelyan, S.~N. (1962).
\newblock {On the completeness of systems of analytic functions}.
\newblock {\em Translations Amer. Math. Soc.}, 19:109--166.

\bibitem[Napalkov and Yulmukhametov, 1995]{NY}
Napalkov, V. and Yulmukhametov, R. (1995).
\newblock {On the Cauchy transform of functionals on a Bergman space}.
\newblock {\em Russiam Acad. Sci. Sb. Math.}, 82:327--336.

\bibitem[Ransford, 1995]{Ransford}
Ransford, T. (1995).
\newblock {\em Potential Theory in the Complex Plane}.
\newblock London Mathematical Society.

\bibitem[Rudin, 1991]{RudinFunctionalAnalysis}
Rudin, W. (1991).
\newblock {\em Functional Analysis}.
\newblock McGraw-Hill.

\bibitem[Stein, 1970]{Ste}
Stein, E.~M. (1970).
\newblock {\em Singular Integrals ans Differentiability Properties of
  Functions}.
\newblock Princeton University Press.

\bibitem[Thelen, 2017]{TaylorShiftArticle}
Thelen, M. (2017).
\newblock {Frequently hypercyclic Taylor shifts}.
\newblock {\em Comput. Methods Funct. Theory}, 17:129--138.

\bibitem[Walters, 1982]{Walters}
Walters, P. (1982).
\newblock {\em An Introduction to Ergodic Theory}.
\newblock Springer.

\end{thebibliography}
Address: University of Trier, FB IV, Mathematics,  D-54286 Trier, Germany\\
e-mail: jmueller@uni-trier.de; maikethelen@web.de

\end{document}